\DeclareFontFamily{U}{mathx}{}
\DeclareFontShape{U}{mathx}{m}{n}{<-> mathx10}{}
\DeclareSymbolFont{mathx}{U}{mathx}{m}{n}
\DeclareMathAccent{\widehat}{0}{mathx}{"70}
\DeclareMathAccent{\widecheck}{0}{mathx}{"71}
\newcommand{\norm}[1]{\lVert#1\rVert}
\newtheorem{theorem}{Theorem}
\newtheorem{lemma}{Lemma}
\newtheorem{definition}{Definition}
\theoremstyle{remark}
\newtheorem{remark}{Remark}
\author{Alex Bergman}
\title{Inner approximations of doubling weights with applications to Beurling-Malliavin theory in Toeplitz kernels}
\date{\today}
\begin{document}

\maketitle

\begin{abstract}
    A meromorphic inner function is a bounded holomorphic function in the upper half-plane which is unimodular on the real line and extends to a meromorphic function in the whole complex plane. The argument of a meromorphic inner function on the real line is a strictly increasing function. It turns out that it is important for many problems in function theory to approximate an arbitrary increasing function, $f$, by the argument of a meromorphic inner function. Depending on desired approximation this is a delicate problem. In this paper consider the case when $f$ satisfies a doubling condition. We give two applications of our main result. The first is a sufficient density condition for a set $\Lambda$ to be a zero set for a Toeplitz kernel with real analytic and unimodular symbol. Our second application is to describe a class of admissible Beurling-Malliavin majorants in model spaces. The generality considered here lets us treat most cases of model spaces generated by meromorphic one-component inner functions.
\end{abstract}

\section{Introduction}

An inner function is a holomorphic map $\Theta : \mathbb{C}_{+} \to \mathbb{D}$ which is maximal in the sense that
\begin{equation*}
    \lim_{y\to 0} \lvert \Theta(x+iy)\rvert = 1, \text{ for almost every } x \in \mathbb{R}.
\end{equation*}
Inner functions play a central role in the theory of Banach and Hilbert spaces of analytic functions and, in particular, with its connections to harmonic analysis and spectral theory in one variable. Let us give a brief sketch of how they arise in the theory of Schrödinger operators. Let $V \in L^{1}_{loc}(0,b)$, where $0<b\leq \infty$ be real-valued, and consider the eigenvalue problem
\begin{equation}\label{eq:schrödinger}
    -\partial_{x}^{2}u(x,z) + V(x)u(x,z) = zu(x,z), \text{ for all } x\in (0,b), \text{ and }z \in \mathbb{C}.
\end{equation}
We suppose that the left end-point is regular (i.e., $V$ is integrable at $0$), but we allow the right end-point to be singular. We fix a separated self-adjoint boundary condition at $b$. For example in the limit point case this means that we assume our solution belongs to $L^{2}$ and if $b < \infty$ and $V$ is integrable on all $(0,b)$ we may choose, for example, $u(b,z) = 0$ (Dirichlet) or $u'(b,z) = 0$ (Neumann). The Weyl $m$-function is then
\begin{equation*}
    m(z) = \frac{u'(0,z)}{u(0,z)}, \text{ for all } z \in \mathbb{C} \setminus \mathbb{R},
\end{equation*}
where the derivative is in $x$. It is well-known that $m$ maps the upper half-plane into itself and hence
\begin{equation*}
    \Theta(z) = \frac{m(z)-i}{m(z)+i},
\end{equation*}
maps into the unit disc. It turns out that $\Theta$ is an inner function if and only if the spectrum is purely singular. Moreover, the solution
\begin{equation*}
    \omega(x,z) = \frac{u(x,z)}{u'(0,z)+iu(0,z)},    
\end{equation*}
does not depend on the choice of solution $u$ and the map
\begin{equation*}
    Uf(z) = \frac{1}{\sqrt{\pi}}\int_{0}^{b} f(x)\omega(x,z)dx, \text{ for all } z \in \mathbb{C}_{+},    
\end{equation*}
is a unitary map from $L^{2}(0,b)$ onto the model space $K_{\Theta} = H^{2} \ominus \Theta H^{2}$ (to be defined shortly). Indeed, the reproducing kernel for the space $\left\{ Uf : f \in L^{2}(0,b)\right\}$ is given by the formula
\begin{equation*}
    K(\lambda,z) = \frac{1}{\pi}\int_{0}^{b} \omega_{\overline{\lambda}}(x)\omega_{z}(x)dx = \frac{1-\overline{\Theta(\lambda)}\Theta(z)}{2\pi i(z-\overline{\lambda})}.
\end{equation*}
Thus this procedure transforms basis problems and spectral problems for Schrödinger equations to problems in complex function theory. The most well-known example of this approach is the work of Khrushchev, Nikolski, and Pavlov on Pavlov's solution to the Riesz basis problem for exponentials, see the classical paper \cite{MR643384}. This method has recently found great success in the inverse spectral theory, see for example \cite{MR2739783, MR1943095, MR4033521}. Let us also mention that this program is realizable for more general equations, such as canonical systems, see for example \cite{MR4639943, aleman2025invariantsubspacesgeneralizeddifferentiation, MR3890099}.\newline

In this article we shall be concerned only with those inner functions which extend to meromorphic functions in the whole complex plane. In the language of spectral theory this corresponds to the assumption of the resolvent of the self-adjoint operator associated with equation \eqref{eq:schrödinger} being compact. Meromorphic inner functions are characterized by their Nevanlinna factorization being of the form
\begin{equation*}
    \Theta(z) = B(z)S_{a}(z),    
\end{equation*}
where $B$ is a Blaschke product whose zeros $\left\{ z_{n} \right\}$ accumulate only at infinity and $S_{a}(z) = \exp(iaz)$, for some $a \geq 0$. By multiplying by a unimodular factor we may assume, without loss of generality, that $\Theta(0) = 1$. From now on we make this assumption. Since $\Theta$ is unimodular on the real-line we may write
\begin{equation*}
    \Theta(x) = \exp(i\arg\Theta(x)).
\end{equation*}
Note that the argument of a meromorphic inner function on the real line can be chosen to be a real-analytic. It is unique if we assume that it is continuous and fix the value at the origin to be zero (which we do from now on). Its derivative can be expressed in terms of the Nevanlinna factorization as follows
\begin{equation*}
    \arg \Theta'(x) = a + 2\sum_{n} \frac{y_{n}}{(x-x_{n})^{2} + y_{n}^{2}}, \text{ for all } z_{n} = x_{n}+iy_{n}.
\end{equation*}
This also shows that the argument is an increasing function. Central problems in the theory of analytic functions spaces include the distribution of zeros. A typical problem encountered in the theory is that of approximating a smooth and strictly increasing map $f : \mathbb{R} \to \mathbb{R}$ by the argument of a meromorphic inner function, $\Theta$, see the articles \cite{MR3631454, alexei_rupam_bounded_phase} which are dedicated to this problem and the articles \cite{MR2215727, MR2609247, MR2910798, MR2628803} where such an approximation plays a direct and decisive role. If one settles for the requirement that $f-\arg \Theta$ be bounded this is not a difficult problem. However, typically one is also required to control the derivative $\arg \Theta'(x)$ for real $x$ by a quantity related to the derivative of $f$. Our main result allows one to do this in the case that $f'$ behaves like $\alpha'$ for some $\alpha$ which is the argument of a meromorphic inner function and such that the measure
\begin{equation*}
    \mu_{\alpha}(I) = \int_{I} \alpha'(x)dx,
\end{equation*}
is a local doubling measure in the following sense: for an interval $I \subset \mathbb{R}$ we denote by $2I$ the interval with twice the length of $I$ and the same center. A Borel measure $\mu$ is called doubling if there exists a constant $C > 0$, such that for all bounded intervals $I$ the inequality $\mu(2I) \leq C\mu(I)$ is satisfied. It is called locally doubling if $\mu(2I) \leq C\mu(I)$, whenever $\mu(2I) \leq 1$. In fact, we shall also obtain more general results when the measure (not necessarily analytic) satisfies a local doubling condition (to be stated shortly).\newline

Since it may be quite difficult to see the applicability of such a result for the non-expert we include two applications of our result. The first is a general criterion for a set $\Lambda$ to be a zero set in a Toeplitz kernel generated by a unimodular and real-analytic symbol. This includes model and de Branges spaces. The second concerns the construction of a model space function $f$ which is majorized on the real-line by a given weight function $\omega$. This is in the theme of the Beurling-Malliavin majorant theorem which looks for weights which have a non-trivial minorant with Fourier spectrum contained in a fixed interval. We begin with some terminology.\newline

Let $H^{p}$ denote the Hardy space of the upper half-plane, $\mathbb{C}_{+}$, consisting of all analytic functions subject to the growth condition
\begin{equation*}
    \norm{f}_{p}^{p} = \sup_{y > 0} \int_{-\infty}^{\infty} \lvert f(x+iy) \rvert^{p}dx < \infty, \text{ for all } 1 \leq p < \infty.
\end{equation*}
For $p = \infty$ we let $H^{\infty}$ denote the set of bounded analytic function in the upper half-plane. Standard references for Hardy spaces include \cite{MR2261424} and \cite{MR1669574}.Functions in $H^{p}$ have non-tangential boundary limits Lebesgue almost-everywhere on the real line. We freely identify an $H^{p}$ function with its extension to the real-line. Moreover, the analytic function can be recovered from its boundary values by the means of the Poisson integral formula.\newline

If $\alpha : \mathbb{R} \to \mathbb{R}$ is strictly increasing we define the metric
\begin{equation*}
    d_{\alpha}(x,y) = \lvert \alpha(x)-\alpha(y) \rvert.
\end{equation*}
This will be the natural metric for our problems. If $\alpha : \mathbb{R} \to \mathbb{R}$ is the argument of a meromorphic inner function then the measure $d\mu_{\alpha}(x) = \alpha'(x)dx$ is locally doubling if and only if one (and hence both) of the following conditions hold
\begin{enumerate}[(i)]
    \item There exists constants $M>0$ and $m > 0$, such that $m\alpha'(y) \leq \alpha'(x) \leq M \alpha'(y)$, for all $x, y \in \mathbb{R}$ such that $d_{\alpha}(x,y) \leq 1$.
    \item There exists a constant $C > 0$, such that $\lvert \alpha''(x) \rvert \leq C\alpha'(x)^{2}$.
\end{enumerate}
For a proof we refer to \cite{MR2944102}. Let us also remark that Aleksandrov has shown that property $(ii)$ characterizes the so-called \textit{one-component} inner functions. More on this in the section on preliminaries. We take these properties to be the definition of a \textit{regular locally doubling weight}.

\begin{definition}
    Let $\alpha : \mathbb{R} \to \mathbb{R}$ be a smooth strictly increasing map. We say that $\alpha$ is a \textit{regular locally doubling weight} if
    \begin{enumerate}[(i)]
    \item there exists constants $M,m > 0$ and $R > 0$, such that $m\alpha'(y) \leq \alpha'(x) \leq M \alpha'(y)$, for each $x, y \in \mathbb{R}$ such that $d_{\alpha}(x,y) \leq 1$ and $\lvert x \rvert, \lvert y \rvert \geq R$.
    \item There exists a constant $C > 0$ and $R > 0$, such that $\lvert \alpha''(x) \rvert \leq C\alpha'(x)^{2}$, for $\lvert x \rvert \geq R$.
\end{enumerate}
\end{definition}

Note that we only require the properties to hold away from the origin. The simplest examples of locally doubling weights are the power functions $\lvert x \rvert^{\kappa}$ with $\kappa > 0$. We are now ready to state our main theorem. We use the notation $\langle x \rangle = (1+x^{2})^{1/2}$ and $f \lesssim g$ for two positive functions $f$ and $g$ means that there exists a constant $C > 0$, such that $f \leq Cg$.

    \begin{theorem}\label{thm:main}
        Let $\alpha : \mathbb{R} \to \mathbb{R}$ be a smooth locally doubling weight satisfying
        \begin{equation*}
                \langle x \rangle^{-\kappa} \lesssim \alpha'(x), \text{ for some } \kappa < 1.
        \end{equation*}
        Then there exists a meromorphic inner function $J$ and a positive integer $N_{0}$, such that
        \begin{equation*}
            \lvert \alpha(x) -\arg J(x) \rvert \leq 2\pi, \text{ for all } x \in \mathbb{R},
        \end{equation*}
        and
        \begin{equation*}
            \lvert J'(x) \rvert \lesssim \alpha'(x)^{N_{0}}\langle x \rangle^{N_{0}}, \text{ for all }x \in \mathbb{R},
        \end{equation*}
        and
        \begin{equation*}
            \langle \lambda \rangle^{-N_{0}}\alpha'(\lambda)^{-N_{0}} \lesssim \lvert J'(\lambda) \rvert, \text{ for all } \lambda \in \Lambda,
        \end{equation*}
        where $\Lambda = \left\{ \lambda \in \mathbb{R} : \alpha(\lambda) = 0 \mod 2\pi\right\}$.
    \end{theorem}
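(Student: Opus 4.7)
The strategy is to realize $J$ as a Blaschke product whose zeros $\{z_n = c_n + iy_n\}_{n \in \mathbb{Z}}$ are placed along a controlled grid above $\mathbb{R}$, so that the identity
\[
    \arg(J)'(x) = 2\sum_n \frac{y_n}{(x-c_n)^2 + y_n^2}
\]
reproduces $f'$ up to a bounded oscillation of $\arg J - f$. This is in the spirit of the ``clock''-type constructions in \cite{MR3631454}; what is new here is the systematic use of the locally doubling hypothesis on $\alpha$.

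\emph{Step 1: Partition.} Since $f$ is smooth and strictly increasing, define partition nodes $\{t_n\}$ by $t_0 = 0$ and $f(t_{n+1}) - f(t_n) = 2\pi$, and set $I_n = [t_n, t_{n+1}]$. Regularity of $\alpha$ combined with $f' \sim \alpha'$ implies $\alpha'$ is essentially constant on each $I_n$, so $|I_n| \sim 1/\alpha'(t_n)$. The lower bound $\alpha'(x) \gtrsim \langle x\rangle^{-\kappa}$ with $\kappa < 1$ then gives $|I_n| \lesssim \langle t_n\rangle^{\kappa}$ and, telescoping, $t_n \to \pm\infty$.

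\emph{Step 2: Zeros and Blaschke convergence.} Place a zero $z_n = c_n + iy_n$, where $c_n$ is the midpoint of $I_n$ and $y_n = |I_n|$. A dyadic counting argument shows that in the shell $|c_n| \sim 2^k$ there are $\sim 2^{k(1-\kappa)}$ nodes with $y_n \sim 2^{k\kappa}$, so each shell contributes $\sim 2^{-k}$ to $\sum y_n/(1 + |z_n|^2)$, which is summable precisely because $\kappa < 1$. The resulting Blaschke product $B$ converges; set $J = B$.

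\emph{Step 3: Argument comparison.} Each Blaschke factor contributes exactly $2\pi$ to the total variation of $\arg B$ along $\mathbb{R}$. With one zero per $I_n$ and $f$-increment also $2\pi$ on $I_n$, the two increasing functions $f$ and $\arg B$ run in lockstep; a Poisson-type estimate controlling the influence of distant zeros on $\arg B$ inside $I_n$ then yields $|f - \arg B| \leq 2\pi$ after normalizing at the origin.

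\emph{Step 4: Derivative bounds.} For $x \in I_m$, split the sum for $\arg(J)'(x)$ into the local piece $|n-m| \leq 1$, a ``nearby'' piece consisting of $n$ with $d_\alpha(x, c_n) \leq 1$, and the remainder. The first two pieces are of order $\alpha'(x)$ by the locally doubling property (both in its $(i)$ and $(ii)$ forms). The tail is bounded using the crude counting of Step 2, producing the polynomial loss $\alpha'(x)^{N_0}\langle x\rangle^{N_0}$. The lower bound at the distinguished points $\lambda$ follows by retaining only the contribution of the single nearest zero, which already dominates $\alpha'(\lambda)$ and hence dominates the much weaker quantity $\langle\lambda\rangle^{-N_0}\alpha'(\lambda)^{-N_0}$.

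The main obstacle will be Step 4: the hypothesis on $\alpha$ is only \emph{locally} doubling (it is quantitative only on $d_\alpha$-balls of radius at most $1$), so controlling $\arg(J)'$ globally requires marrying the local sharp bounds to polynomial global growth estimates for $\alpha$. This local-to-global passage is exactly what forces the polynomial loss $N_0$ in both the upper and lower derivative bounds, rather than the cleaner statement $\arg(J)'\sim\alpha'$.
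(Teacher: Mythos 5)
Your construction (a Blaschke product with one zero $z_n = c_n + i|I_n|$ placed above the midpoint of each $2\pi$-increment interval of $f$) is genuinely different from the paper's. The paper does not build a Blaschke product directly: it defines $\Lambda = \{f \equiv 0 \bmod 2\pi\}$, sets $u = \mathbbm{1}_U - 1/2$ where $U$ is the union of left halves of the complementary gaps of $\Lambda$, and then produces $J$ via the Cayley--Krein transform
\[
\frac{1+J}{1-J} = \exp\left(\int\left(\frac{1}{t-z}-\frac{t}{1+t^2}\right)u(t)\,dt\right),
\]
reading off derivative bounds from the Aleksandrov--Clark measures of $J$ and a residue calculation. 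These are not the same object and the estimates come out quite differently.

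The real gap in your plan is Step~3. You assert ``a Poisson-type estimate controlling the influence of distant zeros'' gives $|f - \arg B| \leq 2\pi$, but in your construction this is the hard part, not a formality. Writing
\[
D(x) = \arg B(x) - \arg B(0) - 2\pi\,\#\{n : c_n \in (0,x)\}
= 2\sum_n\left[\arctan\tfrac{x-c_n}{y_n} - \arctan\tfrac{-c_n}{y_n} - \pi\,\mathbbm{1}_{(0,x)}(c_n)\right],
\]
the Taylor remainders of the three groups of terms ($c_n<0$, $0<c_n<x$, $c_n>x$) each produce divergent sums of the form $\sum |I_n|/|c_n - a|$, which individually grow like $\log x$ or $\log\bigl(x\,\alpha'(x)\bigr)$. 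The boundedness of $D$ rests on a delicate cancellation between these logarithmic contributions, and this cancellation uses both the hypothesis $\kappa < 1$ (to make $y_n/|c_n| \to 0$) and the fact that $\sum_n y_n \delta_{c_n}$ is comparable to Lebesgue measure at every scale. None of this is addressed in the proposal. Moreover, even granting boundedness, the sharp constant $2\pi$ will not come out of this route: in the paper it falls out automatically because the level set $\{J=1\}$ is forced to equal $\Lambda$, so $\arg J$ and $f$ pass through the same multiples of $2\pi$ at the same points; your Blaschke product has no such level-set identity. (For the applications a bound $|f - \arg J| \leq C$ would suffice, so this second point is cosmetic, but the unproved boundedness is not.)

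Two smaller observations. First, your Step~4 is actually stronger than needed: with $y_n = |I_n|$ and $|x - c_n| \geq y_n/2$ for $n$ different from the interval containing $x$, a comparison with $\int_{\mathbb{R}\setminus I_m} (t-x)^{-2}\,dt$ gives $\arg B'(x) \lesssim 1/|I_m| \sim \alpha'(x)$, with matching lower bound $\gtrsim \alpha'(x)$ from the local term — no polynomial loss at all, better than the paper's $\langle x\rangle^{N_0}\alpha'(x)^{N_0}$. So the sentence in your plan predicting a polynomial loss from ``crude counting'' is not actually what happens; if the construction can be pushed through, it gives a cleaner conclusion. Second, be aware that the paper's Theorem~\ref{thm:construction_inner_function} (and its applications in Sections~\ref{sec:zero_sets} and~\ref{sec:bm_majorants}) uses the exact identity $\{J=1\} = \Lambda$, which your $J$ will not satisfy; for Theorem~\ref{thm:main} alone this is immaterial, but it limits how far the Blaschke route can replace the paper's construction downstream.
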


\subsection{Applications}

Since the proof of the main construction will be quite technical we wish to describe our applications first. The two applications and their corresponding sections can be read independently of each other.

\subsection*{Zero sets in Toeplitz kernels}

Let $P_{+}$ denote the orthogonal projection from $L^{2}$ onto $H^{2}$ and $U \in L^{\infty}(\mathbb{R})$. The Toeplitz operator with symbol $U$ is the map
\begin{equation*}
    T_{U} : H^{2} \to H^{2}, \; \; T_{U}f = P_{+}(Uf).
\end{equation*}
Toeplitz operators are ubiquitous in harmonic analysis, complex analysis, and operator theory. In this article we shall be concerned with their kernels. The kernel of the operator $T_{U}$ is given by
\begin{equation*}
    N^{2}(U) = \left\{ f \in H^{2}: \overline{Uf} \in H^{2}\right\}.   
\end{equation*}
Toeplitz kernels are important in part because they include the so-called model subspaces and closely related de Branges spaces which generalize the classical Paley-Wiener space. Because of this connection and the connection between such spaces and spectral theory, as outlined above, Toeplitz kernels give a unified approach to study completeness problems of many systems of special functions in one variable, see for example \cite{MR643384, MR2215727}.

In this article we shall study the more general $L^{p}$ Toeplitz kernels. We denote by $N^{+}$ the Smirnov class in the upper half-plane
\begin{equation*}
\begin{split}
    N^{+} = \left\{ f \in \mathcal{O}(\mathbb{C}_{+}) : f = h/g \text{, with } h,g \in H^{\infty} \text{, and } g \text{ outer}\right\}, \\
    \text{ where } \mathcal{O}(\mathbb{C}_{+}) \text{ is the set of analytic functions in } \mathbb{C}_{+}.
\end{split}
\end{equation*}
In analogy with the case $p=2$ we define the $p$-Toeplitz kernel of $U$ as
\begin{equation*}
    N^{p}(U) = \left\{ f\in H^{p} \cap L^{1}_{loc} : \overline{Uf} \in H^{p} \right\},
\end{equation*}
and the Smirnov Toeplitz kernel
\begin{equation*}
    N^{+}(U) = \left\{ f \in N^{+} \cap L^{1}_{loc} : \overline{Uf} \in N^{+} \right\}.
\end{equation*}
Note that the elements in $N^{+}(U)$ are assumed to be locally summable. This is is a natural condition when $U$ has only one singularity (the one at infinity) and will make the elements real analytic if $U$ is.\newline

The problem which we consider is the problem of describing the discrete sets $\Lambda \subset \mathbb{R}$ which are zero sets in $N^{\infty}(U)$. In \cite{MR2609247} Makarov and Poltoratski gave an answer to this problem (up to an $\epsilon$ gap) for symbols of the form $U = \overline{\Theta}$ with $\Theta$ a meromorphic inner function satisfying $\arg \Theta' \sim \lvert x \rvert^{\kappa}$, for some $\kappa \geq 0$. Their conditions are in terms of triviality (and non triviality) of certain Toeplitz kernels. Here we shall give a geometric condition in terms of an analogue of the usual Beurling density. Our result also allows for more general locally doubling weights. Although, the results are not as strong as in \cite{MR2609247} they also require less work and may indicate that the results in \cite{MR2609247} extend to a larger class of locally doubling weights.\newline

We shall restrict our attention to unimodular and real analytic symbols of the form
\begin{equation*}
    U = \exp(i\arg U), \; \; \arg U(x) = -\alpha(x) + h(x),
\end{equation*}
with $\alpha$ a real analytic locally doubling weight such that there exists $N, C, c > 0$, such that
\begin{equation*}
    c <\alpha'(x) < C\langle x \rangle^{N},
\end{equation*}
and $h$ is bounded and Lipschitz continuous, or more generally is bounded and satisfies an estimate of the form
\begin{equation*}
    \lvert h'(x) \rvert \leq C\langle x \rangle^{K}, \text{ for some } K \geq 0.
\end{equation*}
In this case all elements of $N^{p}(U)$ will be real analytic functions. This motivates the discreteness condition on $\Lambda$.\newline

To formulate our main result we introduce the $\alpha$ upper density for any smooth and strictly increasing function $\alpha$ and $\Lambda \subset \mathbb{R}$. For such $\alpha$ and $\Lambda$ we define
\begin{equation*}
    D_{\alpha}^{+}(\Lambda) = \lim_{r \to \infty}\sup_{\alpha(I) = r} \frac{\#(\Lambda \cap I)}{r}.
\end{equation*}
If $\alpha = x$ we obtain the usual Beurling upper density. We say that a set is $\alpha$-uniformly separated if there exists $\delta > 0$, such that
    \begin{equation*}
        d_{\alpha}(\lambda,\lambda') > \delta, \text{ for all } \lambda, \lambda' \in \Lambda \text{ with } \lambda \neq \lambda'.
    \end{equation*}

\begin{theorem}\label{thm:zero_function}
            Let $\arg U = -\alpha + h$, $\alpha, h$ real-analytic. Suppose $\alpha$ is a regular locally doubling measure satisfying
        \begin{equation*}
            1\lesssim \alpha'(x) \lesssim \langle x \rangle^{K_{0}}, \text{ for some } K_{0} \geq 0.
        \end{equation*}
        and $h \in L^{\infty}$ satisfies
        \begin{equation*}
            \lvert h'(x) \rvert \lesssim \langle x \rangle^{K_{1}}, \text{ for some } K_{1} \geq 0.
        \end{equation*}
        Let $\Lambda$ be a $\alpha$-uniformly separated subset of the line, such that $D_{\alpha}^{+}(\Lambda)< 1/2\pi$. Then there exists $T \in N^{\infty}(U)$, such that
        \begin{equation*}
            T(\lambda) = 0, \text{ for all } \lambda \in \Lambda,
        \end{equation*}
        and
        \begin{equation*}
            \langle \lambda\rangle^{-N} \lesssim \lvert T'(\lambda) \rvert , \text{ for all  } \lambda \in \Lambda.
        \end{equation*}
\end{theorem}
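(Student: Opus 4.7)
The plan is to construct $T$ as a difference $T = J_1 - J_2$ of two meromorphic inner functions, designed so that $J_1(\lambda) = J_2(\lambda)$ at every $\lambda \in \Lambda$ (which forces $T(\lambda)=0$) and so that both $\overline{UJ_i}$ lie in $H^\infty$ (which immediately gives $T\in N^\infty(U)$).

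The first step is to produce an auxiliary meromorphic inner function $K$ with $K(\lambda_n) = 1$ for every $\lambda_n$, via a Herglotz/Clark construction: pick positive weights $w_n$ with $\sum w_n/\langle \lambda_n\rangle^2 < \infty$ and let $H(z) := \sum_n w_n/(\lambda_n - z)$; then $K := (H-i)/(H+i)$ is a meromorphic inner function and, because $H$ has a real pole at every $\lambda_n$, $K(\lambda_n)=1$ exactly. The weights are chosen so that $\psi := \arg K$ has derivative comparable to $2\pi$ times a smoothed density of $\Lambda$; the density hypothesis $D^+_\alpha(\Lambda) < 1/(2\pi)$ then guarantees $\psi'(x) \leq (1-\delta)\alpha'(x)$ in the regular locally doubling sense, for some $\delta > 0$. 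After a harmless modification of $\alpha$ to ensure that $\alpha' - h' - \psi' > 0$ pointwise (needed when $|h'|$ is locally large; the boundedness and real-analyticity of $h$ allow one to factor the bad part into an outer multiplier that divides out at the end), both $f_1 := \alpha - h$ and $f_2 := \alpha - h - \psi$ are smooth, strictly increasing, with derivatives comparable to a regular locally doubling weight. Applying Theorem~\ref{thm:main} to each, we obtain MIFs $J_1,J_2$ with $|\arg J_i - f_i|\le 2\pi$, together with the polynomial upper bound on $|J_i'|$ and the polynomial lower bound on $|J_i'(\lambda)|$ at $\lambda \in \Lambda$.

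It remains to verify the three properties. For vanishing: modulo $2\pi$, $\arg J_1 - \arg J_2 \equiv \psi$, and by coordinating the Herglotz construction of $K$ with the $O(1)$ approximation errors from Theorem~\ref{thm:main} (the slack is provided precisely by the strict inequality $D^+_\alpha(\Lambda) < 1/(2\pi)$), one arranges $J_1/J_2 \equiv K$ on $\mathbb{R}$, so that $J_1(\lambda_n)/J_2(\lambda_n) = K(\lambda_n) = 1$ and hence $T(\lambda_n) = 0$. For $\overline{UT}\in H^\infty$: by construction $\arg(\overline{UJ_1}) = \alpha - h - \arg J_1$ is essentially $0$ and $\arg(\overline{UJ_2}) = \alpha - h - \arg J_2$ is essentially $\psi$, so $\overline{UJ_1}$ and $\overline{UJ_2}$ are each (up to the outer correction above) inner functions in $H^\infty$, whence $\overline{UT}\in H^\infty$. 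For the derivative lower bound, $T'(\lambda) = J_1'(\lambda) - J_2'(\lambda)$, and the Theorem~\ref{thm:main} lower bound $|J_i'(\lambda)| \gtrsim \langle\lambda\rangle^{-N_0}\alpha'(\lambda)^{-N_0} \gtrsim \langle \lambda\rangle^{-N_0(1+K_0)}$, together with the non-cancellation at $\lambda_n$ forced by $\psi'(\lambda_n)\neq 0$, yields $|T'(\lambda)|\gtrsim \langle\lambda\rangle^{-N}$ for a suitable $N$.

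The principal technical obstacle is the exact matching in the previous paragraph: Theorem~\ref{thm:main} only provides an $O(1)$ approximation of $f_i$ by $\arg J_i$, which by itself is too coarse to force $J_1(\lambda_n) = J_2(\lambda_n)$. The resolution is to coordinate the Herglotz construction of $K$ with (possibly a refined form of) Theorem~\ref{thm:main}, with the strict inequality in the density hypothesis ensuring that enough slack is available; a subsidiary obstacle is handling the case where $|h'|$ is much larger than $\alpha'$ on short scales, where one must exploit the boundedness and real-analyticity of $h$ to factor $e^{ih}$ into an auxiliary outer piece that cancels at the end.
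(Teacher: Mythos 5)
Your plan does not match the paper's, and there are genuine gaps that I do not see how to fill.

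The central obstruction is your claim that $\overline{UJ_1}$ and $\overline{UJ_2}$ are ``essentially'' inner, hence in $H^\infty$. A unimodular function on $\mathbb{R}$ lies in $H^\infty$ if and only if it is a unimodular constant times an inner function, and a nonconstant meromorphic inner function has an \emph{unbounded}, strictly increasing argument. By your construction $\arg(\overline{UJ_i}) = \alpha - h - \arg J_i$ is bounded (that is the whole point of Theorem~\ref{thm:main}) but it is not constant, so $\overline{UJ_i}$ is \emph{not} an $H^\infty$ function, and neither is their difference. This is not a defect that an ``outer correction'' patches around after the fact; it is why any correct construction must involve an outer factor from the start. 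The paper produces exactly such an outer factor $G$ via Lemma~\ref{lemma:modulus_of_Smirnoff_class}: one solves $2\widetilde{\log m} = \alpha - h - \arg J - 2\epsilon x - \arg I$ for $m = |G|$, with the Hilbert transform of the bounded, polynomially differentiable right-hand side controlled by Lemma~\ref{lemma:est_H_transform} and the Zygmund estimate. Only then is $T = (1-J)G$ (times a Paley--Wiener type corrector that kills the residual polynomial growth) genuinely in $N^\infty(U)$.

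A second gap, which you acknowledge but do not close, is the exact matching $J_1/J_2 \equiv K$: Theorem~\ref{thm:main} only gives $|f_i - \arg J_i| \le 2\pi$, and bounded-argument error is irreconcilable with an exact pointwise identity on $\mathbb{R}$. You could sidestep this by \emph{defining} $J_1 := K J_2$, but then you are back to the previous problem. Third, the lower bound $|T'(\lambda)| \gtrsim \langle\lambda\rangle^{-N}$ does not follow from the separate lower bounds $|J_i'(\lambda)| \gtrsim \langle\lambda\rangle^{-N_0(1+K_0)}$; the two derivatives could cancel to any order, and ``non-cancellation forced by $\psi'(\lambda_n)\neq 0$'' is not quantified. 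In the paper, $T'(\lambda) = -J'(\lambda) G(\lambda)$ at zeros $\lambda$ of $1-J$, with the lower bound on $|J'(\lambda)|$ supplied directly by Theorem~\ref{thm:construction_inner_function} and $|G(\lambda)|$ bounded below polynomially. Finally, the density hypothesis $D^+_\alpha(\Lambda)<1/(2\pi)$ is exploited in the paper through the regularization Lemma~\ref{lemma:regularization_of_lambda}: one must enlarge $\Lambda$ to an $\alpha$-uniformly dense $\Lambda'$ (filling the arbitrarily long gaps that the hypothesis allows) before Theorem~\ref{thm:construction_inner_function} can produce the inner function $J$ with $\{J=1\}=\Lambda'$. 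Your Herglotz construction of $K$ would face the same issue: in a long $\Lambda$-free interval $\psi'$ is essentially zero, so $\alpha' - h' - \psi'$ need not be a \emph{regular} locally doubling weight without such a regularization step.
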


We also show that the condition cannot be far from sharp in the sense that if the corresponding lower density is strictly greater than $1/2\pi$ then the set cannot be a zero set. Moreover, we also obtain polynomial lower bounds on the derivative of the function $T$ at the zeros which may prove useful when proving interpolation theorems.

Let us also mention the nice paper \cite{MR2944102} which gives sharp interpolation and sampling results for de Branges spaces whose phase function is a doubling measure. In our setting this corresponds to the case $p = 2$ and $U = \overline{\Theta}$ where $\Theta$ is a meromorphic inner function satisfying a global doubling condition. In this case they construct a function similar to ours as well.

\subsection*{Beurling-Malliavin majorants in model spaces}

We now turn to our second application. Let $\omega : \mathbb{R} \to [0,1]$ be a measurable function. The classical Beurling-Malliavin admissible majorant problem is to find a non-trivial function $f \in L^{2}$ with Fourier support in $(-a,a)$ minorizing $\omega$, i.e.
    \begin{equation*}
        \lvert f(x) \rvert \leq \omega(x), \text{ for almost every } x \in \mathbb{R}.
    \end{equation*}
    In this case $\omega$ is called an admissible weight. Experience has shown that this is a very delicate problem. Write $\omega(x) = \exp(-\Omega(x))$. Then a simple necessary condition is the convergence of the logarithmic integral
    \begin{equation*}
        \int \frac{\Omega(x) }{1+x^{2}}dx < \infty.
    \end{equation*}
    However, this condition is by no means sufficient. The well-known criterion of Beurling and Malliavin, from \cite{MR147848}, is that if in addition to the convergence of the logarithmic integral one assumes that $\Omega$ is Lipschitz continuous, then $\omega$ is an admissible weight. This result of Beurling and Malliavin is one of the most celebrated results in the field of uncertainty principles and has found many applications, for example in the fractal uncertainty principle of Bourgain and Dyatlov, \cite{MR3779959} and in the work of Eremenko and Novikov on oscillation of high pass signals, \cite{MR2038065}.

    Since the class of functions with Fourier support in $(-a,a)$ coincides with the model subspace $K_{S_{2a}}$ (up to multiplication by $S_{-a}$) one is naturally led to study the majorant problem in more general model spaces. Let $\Theta$ be an inner function. The model space $K_{\Theta}$ is the orthogonal complement of $\Theta H^{2}$ in $H^{2}$. In terms of Toeplitz kernels we have $K_{\Theta} = N^{2}(\overline{\Theta})$. The systematic study of majorant problems in model spaces was initiated by Havin and Mashreghi in \cite{MR2016246, MR2016247}. This was also taken further by other authors in, for example, \cite{MR2473742, MR2241422, MR2423994}. In fact, as was proven in \cite{MR2016247}, the condition $\norm{\widetilde{\Omega}'}_{\infty} < a/2$ is strong enough to conclude that $\omega$ is an admissible weight. Other results in terms of the Hilbert transform of $\Omega$ were also obtained for certain model spaces. Here we add to that theme.
    \begin{theorem}\label{thm:BM_majorant}
        Let $\Theta$ be a meromorphic inner function and $\omega = \exp(-\Omega)$ a positive function. Suppose
        \begin{enumerate}[(i)]
            \item $\Omega \in L^{1}(\mathbb{P})$, with $d\mathbb{P}(t) = (1+t^{2})^{-1}dt$,
            \item $\widetilde{\Omega} \in C^{1}(\mathbb{R})$,
            \item the function $\alpha = \arg \Theta + 2\widetilde{\Omega}$ is a regular locally doubling weight satisfying
            \begin{equation*}
                1 \lesssim\alpha'(x) \lesssim \langle x\rangle^{K_{0}}, \text{ for some } K_{0} \geq 0.
            \end{equation*}
        \end{enumerate}
        Then $\omega$ is an admissible majorant for $K_{\Theta}$. That is, there exists a non-trivial function $f \in K_{\Theta}$, such that
        \begin{equation*}
            \lvert f(x) \rvert \leq \omega(x), \text{ for all } x \in \mathbb{R}.
        \end{equation*}
    \end{theorem}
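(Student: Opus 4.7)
The strategy is to apply Theorem~\ref{thm:main} to the phase $\alpha=\arg\Theta+2\widetilde\Omega$ and combine the resulting meromorphic inner function with the outer function of modulus $\omega$ to produce an element of $K_\Theta$ bounded by $\omega$. By (i), the outer function $\omega_+\in H^\infty(\mathbb{C}_+)$ with $|\omega_+(x)|=\omega(x)$ on $\mathbb{R}$ exists, and by (ii) its boundary values satisfy $\omega_+(x)=e^{-\Omega(x)-i\widetilde\Omega(x)}$; since $\omega\leq 1$ we have $\omega_+\in H^\infty$. By (iii), $\alpha$ satisfies the hypotheses of Theorem~\ref{thm:main} (with itself as the weight), yielding a meromorphic inner function $J$ with $|\arg J-\alpha|\leq 2\pi$ and polynomial bounds on $|J'|$.

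The heart of the construction is the boundary identity
\begin{equation*}
    \omega_+(x)^2 J(x) = \Theta(x)\,\omega(x)^2 e^{ic(x)}, \quad x\in\mathbb{R},
\end{equation*}
where $c=\arg J-\alpha$ satisfies $|c|\leq 2\pi$; it follows from $\alpha=\arg\Theta+2\widetilde\Omega$ and the explicit formula for $\omega_+$. In words, the analytic function $\omega_+^2 J\in H^\infty$, of boundary modulus $\omega^2$, factors through $\Theta$ on $\mathbb{R}$ up to the bounded unimodular error $e^{ic}$. We take as candidate
\begin{equation*}
    f(z) = \frac{\omega_+(z)^2 J(z)}{(z+i)^N}
\end{equation*}
for an integer $N$ chosen large enough to guarantee $f\in H^2$. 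Then $|f(x)|\leq \omega(x)^2/\langle x\rangle^N\leq \omega(x)$ on $\mathbb{R}$, and $f\not\equiv 0$ because $\omega>0$.

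It remains to establish $f\in K_\Theta$, equivalently that the boundary function $f/\Theta$ lies in $\overline{H^2}$. Substituting the identity gives $f(x)/\Theta(x)=\omega^2(x)e^{ic(x)}/(x+i)^N$ on $\mathbb{R}$, a bounded function in $L^2$. The main obstacle is that this boundary datum must be the trace of a function analytic in $\mathbb{C}_-$: the meromorphic extension $\omega_+^2 J/\Theta$ viewed from above has simple poles at the zeros $z_n$ of $\Theta$, and one must show that the principal parts $r_n/(z-z_n)$---each of which lies in $\overline{H^2}$ since $z_n\in\mathbb{C}_+$---sum in $L^2$ and account exactly for the deviation of $f/\Theta$ from an anti-analytic function. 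The polynomial estimate on $|J'|$ from Theorem~\ref{thm:main}, combined with the decay from $(z+i)^{-N}$ for $N$ large, is what controls the size of the residues $r_n=\omega_+(z_n)^2 J(z_n)/(\Theta'(z_n)(z_n+i)^N)$ so that this residue sum is $L^2$-convergent and that subtracting it places $f/\Theta$ in $\overline{H^2}$. It is in this Hardy-class verification that the regular locally doubling hypothesis on $\alpha$ enters decisively.
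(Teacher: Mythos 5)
Your boundary identity $\omega_+^2 J = \Theta\,\omega^2 e^{ic}$ is correct and is the same algebraic fact that drives the paper's argument. But the construction $f=\omega_+^2 J/(z+i)^N$ is not shown to lie in $K_\Theta$, and this is not a minor gap: it is the entire content of the theorem. To have $f\in K_\Theta$ you need $\bar\Theta f$ to be the boundary value of a function in $\overline{H^2}$, i.e.\ $P_+(\bar\Theta f)=0$. Using your identity, $\bar\Theta f=\omega^2 e^{ic}/(x+i)^N$ on $\mathbb{R}$; this is in $L^2$, but $L^2$ tells you nothing about membership in $\overline{H^2}$. The function $e^{ic}$ is a bounded unimodular perturbation with no control on the sign of its argument's derivative, so there is no reason for the inner part of this function (as a $\mathbb{C}_-$ factorization) to actually be inner. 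Concretely, its argument is $c(x)-N\arctan$-type terms; $c'$ is only bounded by $|J'|+\alpha'\gtrsim 1$, which grows, while the $(x\pm i)^{\pm N}$ contribution decays like $N/\langle x\rangle^2$, so increasing $N$ does not help.

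The residue sketch does not rescue this. First, the residues you would need to control sit at the zeros $z_n$ of $\Theta$ in $\mathbb{C}_+$, but the estimates furnished by Theorem~\ref{thm:main} concern $|J'(\lambda)|$ at the \emph{real} points $\lambda\in\Lambda$ where $\arg J\in 2\pi\mathbb{Z}$; these are unrelated sets, so the polynomial bounds do not control the residues $r_n$. Second, and more fundamentally, if you subtract the residue sum $\sum r_n/(z-z_n)$ from $f/\Theta$ to make it co-analytic, the corresponding correction to $f$ is by $\Theta\sum r_n/(z-z_n)\in\Theta H^2$, which changes $f$ and destroys the pointwise bound $|f|\leq\omega$ that you established — that bound was special to the specific formula $\omega_+^2 J/(z+i)^N$ and does not survive subtraction of an arbitrary $\Theta H^2$ element.

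The paper avoids this trap by never trying to show directly that a hand-built function lies in $K_\Theta$. Instead it uses Lemma~\ref{lemma:modulus_of_Smirnoff_class}: a nonnegative $m$ with $\log m\in L^1(\mathbb{P})$ is the modulus of some element of $N^+(V)$ \emph{if and only if} $2\widetilde{\log m}+\arg V=-\arg I+2\pi n$ for some meromorphic inner $I$. Taking $V=S_{2\epsilon}\bar\Theta$ and using Theorem~\ref{thm:main} to produce $I$ with $\alpha-2\epsilon x-\arg I$ bounded, one can \emph{define} $m$ by a Hilbert-transform identity so that $m\omega$ is automatically the modulus of some $f\in N^+(S_{2\epsilon}\bar\Theta)$; the only thing left to check is that $m$ grows at most polynomially (via Lemma~\ref{lemma:est_H_transform} and Zygmund's exponential estimate), and the polynomial growth is then corrected by multiplying by a Paley–Wiener-type function of exponential type $<\epsilon$ that decays polynomially. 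The $\epsilon$-shift $S_{2\epsilon}$ is precisely what buys the room to do this last correction while landing back in $K_\Theta$; your proposal has no analogue of this step. In short, the characterization-of-moduli route produces the Toeplitz kernel membership for free, whereas your direct construction leaves it unproven and, as written, unprovable.
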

    In particular this works whenever the function obeys a power type-law
    \begin{equation*}
        \arg \Theta'(x) + 2\widetilde{\Omega}'(x) \sim \langle x \rangle^{K}, \text{ for some } K \geq 0.
    \end{equation*}

\subsection{Notation}

Before we begin let us fix some notation. We set
\begin{equation*}
    \langle x \rangle = (1+x^{2})^{1/2},
\end{equation*}
and use the asymptotic notation $a \lesssim b$ to mean that there exists a constant $C > 0$, such that
\begin{equation*}
    a(x) \leq Cb(x),
\end{equation*}
where $a$ and $b$ are two nonnegative functions. By $a \sim b$ we mean $a \lesssim b$ and $b\lesssim a$. \newline

The Poisson measure is given by $d\mathbb{P}(t) = (1+t^{2})^{-1}dt$. Let $u \in L^{1}(\mathbb{P})$. \newline
Our real conjugation operator (Hilbert transform) is
\begin{equation*}
    \tilde{u}(x) = \frac{1}{\pi} \text{p.v.}\int \left(\frac{1}{x-t} + \frac{t}{1+t^{2}}\right)u(t)dt.
\end{equation*}
The Poisson integral is
\begin{equation*}
    Pu(z) = \frac{1}{\pi}\int \frac{y}{(x-t)^{2}+y^{2}}u(t)dt.
\end{equation*}
Our complex conjugation operator is
\begin{equation*}
    Qu(z) = \frac{1}{\pi}\int \left(\frac{x-t}{(x-t)^{2}+y^{2}}+\frac{t}{1+t^{2}}\right)u(t)dt.
\end{equation*}
Our Schwartz integral is
\begin{equation*}
    \mathcal{S}u(z) = Pu(z)+iQu(z) = \frac{1}{i\pi} \int \left( \frac{1}{t-z} - \frac{t}{1+t^{2}}\right)u(t)dt.
\end{equation*}

\section{Preliminaries on Toeplitz kernels with real-analytic symbol}

Let $\gamma \in C^{\omega}(\mathbb{R})$ (the space of real analytic functions) and $V(x) = e^{i\gamma(x)}$. Recall that we define the Smirnov Toeplitz kernel as
\begin{equation*}
    N^{+}(V) = \left\{ f \in N^{+} \cap L^{1}_{loc}(\mathbb{R}) : \overline{Vf} \in N^{+} \right\}.
\end{equation*}
First we show that functions in $N^{+}(V)$ are real-analytic.

\begin{lemma}
    Let $\gamma \in C^{\omega}(\mathbb{R})$ and set $V = e^{i\gamma}$. Then $N^{+}(V) \subset C^{\omega}(\mathbb{R})$.
    \begin{proof}
        Let $f \in N^{+}(V)$. Set
        \begin{equation*}
            F(z) = \begin{cases}
                f(z), \; \; z \in \mathbb{C}_{+} \\
                V^{-1}(z)(Vf)(z), \; \;z \in \Omega
            \end{cases},
        \end{equation*}
        where $\Omega \subset \mathbb{C}_{-}$ is a domain where $V$ extends to a zero free analytic function and $(Vf)$ is the extension of the function into the lower half-plane. Then since $f \in L^{1}_{loc}(\mathbb{R})$ the function $F$ extends to an analytic function across the real-line by Morera's theorem, see e.g. Lemma 1.10., on page 133, in \cite{MR2261424}. Since $F(x) = f(x)$ for almost every real $x$ the result follows.
    \end{proof}
\end{lemma}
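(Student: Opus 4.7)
The plan is to use the real-analyticity of $\gamma$ to extend $V$ holomorphically off the real line into a two-sided neighborhood of $\mathbb{R}$, and then to glue the given $f$ on $\mathbb{C}_+$ to a holomorphic reflection on (part of) $\mathbb{C}_-$, using the identity $f = V^{-1}\overline{Vf}$ that holds a.e. on $\mathbb{R}$. Concretely, since $\gamma \in C^{\omega}(\mathbb{R})$, it admits a holomorphic extension to some open set $\Omega \supset \mathbb{R}$, so $V = e^{i\gamma}$ extends to a zero-free holomorphic function on $\Omega$, and hence $V^{-1}$ is also holomorphic on $\Omega$.

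Next, given $f \in N^{+}(V)$, I would set $g = \overline{Vf} \in N^{+}$ and introduce the Schwarz reflection $g^{*}(z) := \overline{g(\bar z)}$, which is holomorphic on $\mathbb{C}_-$ with non-tangential boundary values $g^{*}(x) = \overline{g(x)} = V(x)\,f(x)$ a.e. on $\mathbb{R}$. I would then define the candidate two-sided extension
\[
F(z) = \begin{cases} f(z), & z \in \mathbb{C}_+, \\ V(z)^{-1} g^{*}(z), & z \in \Omega \cap \mathbb{C}_-, \end{cases}
\]
so that $F$ is holomorphic on $\mathbb{C}_+ \cup (\Omega \cap \mathbb{C}_-)$ and its non-tangential limits from above and from below both coincide with $f$ a.e. on $\mathbb{R}$.

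The remaining step is to glue $F$ holomorphically across the real line. Since $f \in L^1_{loc}(\mathbb{R})$ and the two pieces of $F$ share the same a.e. boundary trace, a Morera-type removable singularities theorem (the statement cited in \cite{MR2261424}) produces a holomorphic extension of $F$ to all of $\mathbb{C}_+ \cup \mathbb{R} \cup (\Omega \cap \mathbb{C}_-)$; its restriction to $\mathbb{R}$ is then real-analytic and agrees with $f$ a.e., giving the desired representative. The main point of care is justifying this gluing using only local integrability of $f$ (rather than continuous matching of the traces), but the Smirnov-class boundary behavior of $f$ and $g$, together with the fact that the gluing factor $V^{-1}$ is genuinely holomorphic in a full open neighborhood of $\mathbb{R}$, make the removable singularities lemma apply cleanly.
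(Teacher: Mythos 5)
Your proof is correct and takes essentially the same approach as the paper: extend $V$ holomorphically to a two-sided neighborhood of $\mathbb{R}$ using real-analyticity, define the candidate extension on $\Omega \cap \mathbb{C}_-$ by dividing the Schwarz reflection of $\overline{Vf}$ by $V$, and invoke the Morera-type gluing lemma from Garnett. Your write-up is actually more explicit than the paper's — you spell out that $(Uf)$ in the lower half-plane means the Schwarz reflection $g^*(z)=\overline{g(\bar z)}$ of $g=\overline{Vf}\in N^+$, which the paper leaves somewhat implicit.
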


An important class of unimodular symbols is given by the meromorphic inner functions. When $U = \overline{\Theta}$ the Toeplitz kernel becomes the well-known model space, $N^{p}(U) = K_{\Theta}^{p}$. Choosing $U = S_{-2a}$ we obtain
\begin{equation*}
    S_{a}N^{p}(S_{-2a}) = S_{a}K_{S_{2a}}^{p} = PW_{a}^{p},
\end{equation*}
the Paley-Wiener space consisting of entire functions of exponential type at most $a$ which are $p$-summable on the real-line. We note that the second equality is well-known, but not entirely obvious. Another important class of symbols are ones of the form $U = \overline{\Theta}J$, where $\Theta$ and $J$ are meromorphic inner functions.

Let us record the statement in the introduction about arguments of meromorphic inner functions which are locally doubling.

\begin{lemma}
    Suppose $\alpha$ is the argument of a meromorphic inner function. Then the following are equivalent
    \begin{enumerate}[(i)]
        \item $\alpha'(x)dx$ is a locally doubling measure.
        \item $\alpha'(x) \sim \alpha'(y)$ whenever $d_{\alpha}(x,y) \leq 1$.
        \item $\lvert \alpha''(x) \rvert \lesssim \alpha'(x)^{2}$.
    \end{enumerate}
    \begin{proof}
        A proof is given in \cite{MR2944102}.
    \end{proof}
\end{lemma}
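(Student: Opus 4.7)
The plan is to establish (iii)$\Rightarrow$(ii) and the equivalence (i)$\Leftrightarrow$(ii) as soft facts about smooth densities, and then to handle the structural step (ii)$\Rightarrow$(iii) using the meromorphic representation of $\alpha$. The equivalence of (i) and (ii) is a standard fact for any smooth density $w=\alpha'$: if $\mu_\alpha(2I)\le 1$, then every pair in $2I$ has $d_\alpha$-distance at most $1$, so by (ii) $\alpha'$ is uniformly comparable across $2I$, giving $\mu_\alpha(2I)\sim 2\mu_\alpha(I)$; conversely, (i)$\Rightarrow$(ii) follows from the standard dyadic chaining argument for locally doubling measures with continuous density.

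For (iii)$\Rightarrow$(ii) I would integrate the logarithmic derivative of $\alpha'$ along the interval from $x$ to $y$: if $|\alpha''|\lesssim(\alpha')^2$ then
\begin{equation*}
\bigl|\log\alpha'(x)-\log\alpha'(y)\bigr|\le\int_x^y\frac{|\alpha''(t)|}{\alpha'(t)}\,dt\lesssim\int_x^y\alpha'(t)\,dt=d_\alpha(x,y)\le 1,
\end{equation*}
so $\alpha'(x)\sim\alpha'(y)$ with uniform constants. For (ii)$\Rightarrow$(iii) the meromorphic structure enters decisively: I would start from
\begin{equation*}
\alpha'(x)=a+2\sum_n\frac{y_n}{(x-x_n)^2+y_n^2},
\end{equation*}
differentiate termwise to obtain $\alpha''(x)=-4\sum_n(x-x_n)y_n/((x-x_n)^2+y_n^2)^2$, and, writing $P_n(x)=y_n/((x-x_n)^2+y_n^2)$, split the sum at the threshold $|x-x_n|\le y_n$. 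The near zeros contribute at most $\sum_n P_n(x)^2\le(\sum_n P_n(x))^2\lesssim(\alpha'(x))^2$, while far zeros give terms of order $y_n/|x-x_n|^3$, which one sums by counting Blaschke mass in each $d_\alpha$-shell around $x$ and invoking (ii) to bound the count.

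The main obstacle is this far-zero estimate: converting the slow variation of $\alpha'$ in (ii) into a Carleson-type count of Blaschke zeros at each scale around $x$ is exactly the content of Aleksandrov's characterization of one-component inner functions. I would invoke that characterization directly rather than reprove it, as the paper does via \cite{MR2944102}.
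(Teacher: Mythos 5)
Your overall architecture, namely proving $(iii)\Rightarrow(ii)$ by integrating $\alpha''/\alpha'$, noting $(ii)\Rightarrow(i)$ is immediate, and handling the remaining implication via the Poisson-sum representation of $\alpha'$, is sensible, and the $(iii)\Rightarrow(ii)$ step is correct as written. However, there are two genuine gaps.

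First, the claim that $(i)\Leftrightarrow(ii)$ is \emph{``a standard fact for any smooth density''} is false, and the implication $(i)\Rightarrow(ii)$ is exactly where the structure of $\alpha$ as the phase of a meromorphic inner function must be used. Consider $\alpha'(x)=\sin^{2}(Nx)+\epsilon$. A direct computation shows that the measure $\alpha'(x)\,dx$ is locally doubling with a constant uniform in $N$ and $\epsilon$ (at scales below $1/N$ the worst ratio is about $8$, coming from intervals centered at a zero of $\sin^{2}$, and at larger scales the density averages to a constant). Yet if $x$ is a peak and $y$ a nearby zero of $\sin^{2}$ then $d_{\alpha}(x,y)\lesssim 1/N\le 1$ while $\alpha'(x)/\alpha'(y)\sim 1/\epsilon$ is unbounded, so $(ii)$ fails. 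The obstruction in the ``dyadic chaining'' you invoke is concrete: starting from a tiny interval $J$ around $x$ with $\mu(J)/\lvert J\rvert\approx\alpha'(x)$, the number of doublings needed to reach scale $\lvert y-x\rvert$ grows like $\log(\lvert y-x\rvert/\lvert J\rvert)\to\infty$, so the accumulated constant $C^{k}$ blows up and no pointwise conclusion follows. What saves the day in the meromorphic case is that $\alpha'$ is the boundary trace of a positive harmonic function, so Harnack's inequality for the harmonic extension controls pointwise oscillation in terms of averages; this is precisely the input that the citation to \cite{MR2944102} encapsulates, and it cannot be replaced by a soft density argument.

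Second, the final step for $(ii)\Rightarrow(iii)$ is circular as stated. You correctly isolate the far-zero estimate as the substantive difficulty and propose to \emph{``invoke Aleksandrov's characterization directly.''} But Aleksandrov's theorem (Theorem 3.4 of \cite{MR1327512}, as cited in the paper) asserts that $(iii)$ is equivalent to $\Theta$ being one-component; it does not by itself tell you that $(ii)$ implies $\Theta$ is one-component. To route through Aleksandrov you would first have to prove $(ii)\Rightarrow$ one-component, which is essentially the assertion you are trying to establish, so nothing has been gained. The correct way to close the loop, mirroring \cite{MR2944102}, is to carry out the far-zero Carleson count directly: write $\alpha''(x)=\sum_{n}P_{n}'(x)$ as you do, group the far zeros into $d_{\alpha}$-annuli $\{t:\ 2^{k}\le d_{\alpha}(x,t)<2^{k+1}\}$, use $(ii)$ (or equivalently local doubling of $\mu_{\alpha}$) to bound the number of zeros whose Poisson mass sits in each annulus, and sum the resulting geometric series to land on $O(\alpha'(x)^{2})$. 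That argument needs to be written out; it is not a corollary of Aleksandrov's theorem, which is a separate result cited by the paper only as an interesting aside.
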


We mention that a theorem of Aleksandrov (Theorem 3.4 in \cite{MR1327512}) shows that in the case of meromorphic inner functions condition $(iii)$ characterizes the so-called one-component inner functions. One-component inner functions are inner functions with the property that there exists $1> \epsilon > 0$, such that the sub level set
\begin{equation*}
    \left\{ z \in \mathbb{C}_{+}: \lvert \Theta(z) \rvert < \epsilon\right\},
\end{equation*}
is connected.

\subsection{Two lemmas needed for the applications}

The next two lemmas are needed for the applications. The result is well-known and goes back to at least Dyakonov \cite{MR1097956}. We denote by $\lvert N^{+}(V)\rvert$ the class of functions on the real line of the form $f = \lvert G \rvert$ with $G \in N^{+}(V)$.

\begin{lemma}\label{lemma:modulus_of_Smirnov_class}
            Let $m \in L^{1}_{loc}(\mathbb{R})$ and $\log m \in L^{1}(\mathbb{P})$. Then $m \in \lvert N^{+}(V) \rvert$ if and only if
            \begin{equation*}
                2 \widetilde{\log m} + \arg(V) = -\arg(I) + 2\pi n,
            \end{equation*}
            for some meromorphic inner function $I$ and measurable integer-valued function $n : \mathbb{R} \to \mathbb{Z}$.
            \begin{proof}
                Suppose first that $m = \lvert G \rvert$ for some $G \in N^{+}(V)$. Without loss of generality we may suppose that $G$ is outer in $\mathbb{C}_{+}$. Moreover, $\overline{VG} \in N^{+}$, and hence there exists an inner function $I$, such that $\overline{VG} = IG$. Since $V$ is real analytic the inner function $I$ is a meromorphic inner function. Passing to the arguments on the real-line and using that $\log m = \log \lvert G \rvert$ we obtain
                \begin{equation*}
                    2 \widetilde{\log m} + \arg(V)= -\arg(I) + 2\pi n,
                \end{equation*}
                where $n$ is some measurable integer-valued function.

                Conversely, if there exists a $I$ and $n$ as in the statement let
                \begin{equation*}
                    F(z) = \exp\left( P[\log m](z) + iQ[\log m](z) \right), \; \; z \in \mathbb{C}_{+}.
                \end{equation*}
                Then $F$ is outer in $\mathbb{C}_{+}$ and $F \in N^{+}$. Moreover by assumption,
                \begin{equation*}
                    VF = e^{i\arg V + i\widetilde{\log m}}\lvert F \rvert = \overline{IF} \in \overline{N^{+}}. 
                \end{equation*}
                Thus $F \in N^{+}(V)$ as claimed.
            \end{proof}    
        \end{lemma}

\begin{lemma}\label{lemma:est_H_transform}
            Let $f \in L^{\infty}(\mathbb{R})$ and suppose $\lvert f'(x) \rvert \lesssim \langle x \rangle^{K}$, for some $K \geq 0$. Then $\widetilde{f}$ is continuous and there exists a constant $C > 0$ (depending on $f$), such that
            \begin{equation*}
                \lvert \widetilde{f}'(x) \rvert \leq C (\log(1+\lvert x\rvert) + \langle x \rangle^{K}).
            \end{equation*}
            \begin{proof}
                We estimate
                \begin{equation*}
                    \left\lvert \int_{1 \geq\lvert x - t \rvert > \delta}\frac{f(t)}{x-t} dt \right\rvert = \left\lvert \int_{1 \geq\lvert s \rvert > \delta}\frac{f(x-s)-f(x)}{s}ds\right\rvert \leq 2C(1+\lvert x \rvert^{K}).
                \end{equation*}
                Now suppose $t > x > 0$, then
                \begin{equation*}
                    \frac{1}{x-t} + \frac{t}{1+t^{2}} = \frac{1+xt}{(x-t)(1+t^{2})} \leq 0.
                \end{equation*}
                Thus for all $x > 0$ we may estimate
                \begin{equation*}
                \begin{split}
                    \left\lvert \int_{x+1}^{\infty} f(t)\left(\frac{1}{x-t} + \frac{t}{1+t^{2}}\right) dt \right\rvert \leq \norm{f}_{\infty} \int_{x+1}^{\infty} -\left(\frac{1}{x-t} + \frac{t}{1+t^{2}}\right)dt \\ = \norm{f}_{\infty} 2^{-1}\log (1+(x+1)^{2}).
                \end{split}
                \end{equation*}
                Similarly if $x > 0$,
                \begin{equation*}
                    \begin{split}
                    \left \lvert \int_{-\infty}^{x-1} f(t) \left(\frac{1}{x-t} + \frac{t}{1+t^{2}}\right)dt \right\rvert \leq \norm{f}_{\infty} \left( -\int_{-\infty}^{-1/x} \frac{1+xt}{(x-t)(1+t^{2})}dt + \int_{-1/x}^{x-1}\frac{1+xt}{(x-t)(1+t^{2})}dt\right).
                    \end{split}
                \end{equation*}
                The first integral on the right hand-side is
                \begin{equation*}
                    -\int_{-\infty}^{-1/x} \frac{1+xt}{(x-t)(1+t^{2})}dt = 2^{-1}\log (1+x^{2}).
                \end{equation*}
                The second integral is
                \begin{equation*}
                    \int_{-1/x}^{x-1} \frac{1+xt}{(x-t)(1+t^{2})}dt = 2^{-1}\log\left( 1+x^{2}\right) + 2^{-1}\log(1+(x-1)^{2}).
                \end{equation*}
                Combining the estimates gives the desired result for all $x > 0$. The case $x < 0$ can be treated similarly.
            \end{proof}
        \end{lemma}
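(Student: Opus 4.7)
My plan is to bound $\widetilde{f}(x)$ by splitting the defining principal-value integral into three pieces according to the position of $t$ relative to $x$: a local part on $\{t : \lvert x-t\rvert \leq 1\}$ and the two tails $\{t > x+1\}$ and $\{t < x-1\}$. The central algebraic observation that makes the tails tractable is the identity
\begin{equation*}
    \frac{1}{x-t} + \frac{t}{1+t^{2}} = \frac{1+xt}{(x-t)(1+t^{2})},
\end{equation*}
whose right-hand side decays like $1/t^{2}$ at infinity, making each tail integral absolutely convergent with explicit logarithmic size.

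For the local part, on $\{\lvert x-t\rvert \leq 1\}$ the correction kernel $t/(1+t^{2})$ is uniformly bounded and contributes only an $O(\norm{f}_{\infty})$ error; the real work is in the principal value $\text{p.v.}\int_{\lvert x-t\rvert \leq 1} f(t)/(x-t)\,dt$. Exploiting the odd symmetry of $1/(x-t)$ on the symmetric interval about $x$, I would replace $f(t)$ by $f(t)-f(x)$ without changing the value, and then apply the mean value theorem together with the hypothesis $\lvert f'(\xi)\rvert \lesssim \langle \xi\rangle^{K}$ to bound $\lvert f(t)-f(x)\rvert \lesssim \lvert t-x\rvert \langle x\rangle^{K}$ uniformly for $t \in [x-1,x+1]$; this turns the principal value into an absolutely convergent integral of size $O(\langle x\rangle^{K})$.

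For the right tail $\{t > x+1\}$ (taking $x > 0$, say), I would use the identity above to dominate the integrand by $\norm{f}_{\infty}$ times $\lvert 1+xt\rvert/[(t-x)(1+t^{2})]$, which has a definite sign throughout this region, and then evaluate the resulting elementary integral by partial fractions to obtain a bound of order $\log(1+x^{2})$. The left tail $\{t < x-1\}$ is analogous, except that $1+xt$ changes sign at $t = -1/x$, forcing a case split at that point; each of the two subregions again yields a logarithmic contribution, and the case $x < 0$ reduces to $x > 0$ by symmetry. Combining the three pieces gives the desired bound $\lvert\widetilde{f}(x)\rvert \leq C(\log(1+\lvert x\rvert) + \langle x\rangle^{K})$, and continuity of $\widetilde{f}$ then follows from the uniformity of these pointwise estimates together with a dominated convergence argument on the truncated integrals. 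The main obstacle is purely technical, namely the careful bookkeeping of signs in the explicit antiderivative computations on the tails; the one substantive subtlety is the sign change of $1+xt$ on the left tail, which is handled by the natural case split at $t = -1/x$.
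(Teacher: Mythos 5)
Your three-piece decomposition --- the local interval $\lvert t-x\rvert\le 1$ handled by the odd-symmetry/mean-value trick (plus the trivial bound on the correction kernel $t/(1+t^2)$), and the two tails handled via the sign of $\frac{1+xt}{(x-t)(1+t^2)}$ with the case split at $t=-1/x$ on the left --- reproduces the paper's own argument step for step. There is, however, a discrepancy you do not address: what this argument actually bounds is $\lvert\widetilde{f}(x)\rvert$, whereas the lemma asserts a bound on $\lvert\widetilde{f}'(x)\rvert$ (note the derivative). These are different quantities, and the derivative estimate does not follow from the estimate on $\widetilde f$ itself. The paper's own proof has the same feature, so you have faithfully reproduced what is on the page, but the gap is still there and it matters: in the proofs of Theorems~\ref{thm:zero_function} and~\ref{thm:BM_majorant} one writes $q(x)=q(0)+\int_0^x q'(t)\,dt$ and needs $\lvert\widetilde u'(t)\rvert\lesssim\langle t\rangle^N$ \emph{under the integral}; a polynomial bound on $\widetilde u$ alone would give only $q(x)\lesssim e^{C\langle x\rangle^N}$, which is useless there.

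To prove the derivative estimate one should instead start from the identity
\begin{equation*}
  \widetilde f\,'(x)=-\frac{1}{\pi}\,\mathrm{p.v.}\!\int\frac{f(t)-f(x)}{(x-t)^2}\,dt,
\end{equation*}
obtained by integration by parts (the boundary terms at $\pm\infty$ vanish because $f$ is bounded). The two tail contributions then give the logarithmic term by an argument closely parallel to yours, since on $\lvert t-x\rvert>1$ the kernel $1/(x-t)^2$ is integrable and one only uses $\lvert f(t)-f(x)\rvert\le 2\norm{f}_\infty$. The local piece, however, after a further integration by parts reduces to $\mathrm{p.v.}\!\int_{\lvert s\rvert\le 1}\frac{f'(x+s)-f'(x)}{s}\,ds$, and controlling this requires some modulus of continuity for $f'$ (Dini, H\"older, or a quantitative bound on $f''$), which the stated hypothesis $\lvert f'(x)\rvert\lesssim\langle x\rangle^K$ alone does not supply. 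You should flag this mismatch and the need for an additional regularity hypothesis, rather than silently reproduce it.
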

        
\section{Construction of the approximating inner function}

    Let $\alpha$ be a smooth strictly increasing function. Set $d_{\alpha}(x,y) =\lvert \alpha(x)-\alpha(y) \rvert$. We assume that $\alpha'(x)dx$ is a regular locally doubling weight. We remind the reader that this means that $\alpha$ satisfies
    \begin{enumerate}[(i)]
        \item $\alpha'(x) \sim \alpha'(y)$, whenever $d_{\alpha}(x,y) \leq 1$ and $\lvert x \rvert, \lvert y \rvert \geq R$, for some $R \geq 0$.
        \item $\lvert \alpha''(x) \rvert \leq C\alpha'(x)^{2}$, whenever $\lvert x \rvert, \lvert y \rvert \geq R$, for some $R \geq 0$.
    \end{enumerate}
    We shall also need to assume that $\alpha'(x) \gtrsim \langle x \rangle^{\kappa}$, for some $\kappa > -1$. We shall construct a meromorphic inner function $J$ approximating $\alpha$ and with control on $J'$.

    We say that a set is $\alpha$-uniformly separated if there exists $\delta > 0$, such that
    \begin{equation*}
        d_{\alpha}(\lambda,\lambda') > \delta, \text{ for all } \lambda, \lambda' \in \Lambda \text{ with } \lambda \neq \lambda'.
    \end{equation*}
    We say that $\Lambda$ is $\alpha$-uniformly dense if there exists $M > 0$, such that for each $\lambda \in \Lambda$ there exists $\lambda'\neq \lambda$ in $\Lambda$, such that
    \begin{equation*}
        d_{\alpha}(\lambda,\lambda') < M.
    \end{equation*}
    \begin{remark}
        Let us note that the next theorem is much simpler if we do not require any estimates on $J'$. Then we could simply choose
        \begin{equation*}
            \frac{1+J}{1-J} = \int \left( \frac{1}{t-z} - \frac{t}{1+t^{2}}\right)\frac{d\sigma}{i\pi},
        \end{equation*}
        for some discrete measure $\sigma$ supported on $\Lambda$ with $\sigma(\lambda) \lesssim 1$. The difficulty lies in choosing $\sigma$, such that we can control $J'$.
    \end{remark}
    The idea behind the proof of the following theorem is from a lemma appearing in Section 6 of \cite{MR2609247}.
    
    \begin{theorem}\label{thm:construction_inner_function}
        Let $\alpha$ be a regular locally doubling weight, satisfying
        \begin{equation*}
                \langle x\rangle^{-\kappa} \lesssim \alpha'(x), \text{ for some } \kappa < 1.
        \end{equation*}
        Suppose $\Lambda \subset \mathbb{R}$ is an $\alpha$-uniformly discrete with $\alpha$-uniformly bounded gaps. Then there exists a meromorphic inner function $J$, such that
        \begin{equation*}
            \left\{ x \in \mathbb{R} : J(x) = 1 \right\} =\Lambda
        \end{equation*}
        and there exists a constant $N_{0}$, such that
        \begin{equation*}
            \langle \lambda \rangle^{-N_{0}}\alpha'(\lambda)^{-N_{0}} \lesssim \lvert J'(\lambda) \rvert \text{ for all } \lambda \in \Lambda, \text{ and } \lvert J'(x) \rvert \lesssim \langle x \rangle^{N_{0}}\alpha'(x)^{N_{0}}, \text{ for }x \in \mathbb{R}.
        \end{equation*}
        \begin{proof}
            We write $\Lambda = \left\{ \lambda_{n}\right\}_{n\in \mathbb{Z}}$ with $\lambda_{n}$ increasing and $\lambda_{0}$ the smallest positive $\lambda$. Throughout the proof we use the fact that
            \begin{equation*}
                1 \sim d_{\alpha}(\lambda_{n+1}, \lambda_{n}) = \alpha'(t_{n})(\lambda_{n+1}-\lambda_{n}) \sim \alpha'(x)(\lambda_{n+1}-\lambda_{n}), \text{ for all } x \in [\lambda_{n},\lambda_{n+1}].
            \end{equation*}
            In particular we have $\alpha'(\lambda_{n})^{-1} \sim \lambda_{n+1}-\lambda_{n}$ for all $n$.\newline
            
            To construct the inner function $J$ we use a special case of the so-called Krein spectral shift formula from the spectral theory of perturbations of self-adjoint operators, see \cite{MR139006}. Let $\omega_{n}$ be the mid point of the complementary interval $(\lambda_{n},\lambda_{n+1})$. That is $\omega_{n} = 2^{-1}(\lambda_{n+1}+\lambda_{n})$. Define 
            \begin{equation*}
                U = \bigcup_{n \in \mathbb{Z}} (\lambda_{n},\omega_{n}),    
            \end{equation*}
            and set
            \begin{equation*}
                u(x) = \mathbbm{1}_{U}(x) - 1/2.
            \end{equation*}
            Define $J$ by the formula
            \begin{equation}\label{eq:def_J}
                \frac{1+J}{1-J} = \exp \left(\int \left(\frac{1}{t-z}-\frac{t}{1+t^{2}}\right)u(t)dt\right), \; \; z\in \mathbb{C}_{+},
            \end{equation}
            The right hand side maps into the right half-plane since
            \begin{equation*}
                \left\lvert \Im\left( \int \left(\frac{1}{t-z}-\frac{t}{1+t^{2}}\right)u(t)dt\right) \right\rvert = \pi \lvert Pu(z)\rvert \leq \pi/2.
            \end{equation*}
            The function $J$ is thus bounded by $1$ and it is inner since $u = 1/2$ almost everywhere. Moreover, it has a meromorphic extension to the full plane since it extends continuously to $\mathbb{R}$. A straightforward computation shows that $\left\{ J = 1 \right\} = \left\{\lambda_{n}\right\}$ and $\left\{ J = -1 \right\} = \left\{ \omega_{n} \right\}$.
            
            To bound $J'(x)$ on the real line we will use the Aleksandrov-Clark representation of $J$. Let $\sigma_{1}$ be the Aleksandrov-Clark measure of $J$ and $\sigma_{-1}$ the dual measure,
            \begin{equation*}
                \frac{1+J}{1-J} = -ia-ibz + \frac{-i}{ \pi}\int \left( \frac{1}{t-z}-\frac{t}{1+t^{2}}\right)d\sigma_{1}(t),
            \end{equation*}
            where $a \in \mathbb{R}$, $b \geq 0$ and
            \begin{equation*}
                \frac{1-J}{1+J} = -ic-idz + \frac{-i}{ \pi}\int \left( \frac{1}{t-z}-\frac{t}{1+t^{2}}\right)d\sigma_{-1}(t),
            \end{equation*}
            where $c \in \mathbb{R}$, $d \geq 0$. Recall that
            \begin{equation*}
                \sigma_{1} = \sum_{n \in \mathbb{Z}} \sigma_{1}(\left\{ \lambda_{n} \right\})\delta_{\lambda_{n}} \; \text{ and } \; \sigma_{-1} = \sum_{n \in \mathbb{Z}} \sigma_{-1}(\left\{ \omega_{n} \right\})\delta_{\omega_{n}}
            \end{equation*}
            are Poisson-finite. In fact, $b = d = 0$ (but we do not use this fact). Passing to the real line we see that
            \begin{equation*}
                \sigma_{1}(\left\{ \lambda_{n}\right\}) = \frac{2\pi}{\lvert J'(\lambda_{n}) \rvert},
            \end{equation*}
            and
            \begin{equation*}
                \sigma_{-1}(\left\{ \omega_{n}\right\}) = \frac{2\pi}{\lvert J'(\omega_{n}) \rvert}.
            \end{equation*}
            We claim that we have
            \begin{equation}\label{eq:est_below}
                \frac{1}{\langle \lambda_{n} \rangle^{K}\alpha'(\lambda_{n})^{K}}\lesssim \lvert J'(\lambda_{n}) \rvert,
            \end{equation}
            and
            \begin{equation}\label{eq:est_sigmas}
                \sigma_{-1}(\left\{ \omega_{n} \right\}) \lesssim \alpha'(\omega_{n})^{-1}, \text{ and } \lvert J'(\omega_{n})\rvert \lesssim \alpha'(\omega_{n})^{N_{0}} \langle \omega_{n} \rangle^{N_{0}}.
            \end{equation}
            Assuming this for the moment we may complete the proof. We have
            \begin{equation*}
                2\lvert J'(x) \rvert = \lvert1-J(x)\rvert^{2}\lvert \mathcal{S}'\sigma_{1}(x)\rvert \; \text{ and } \; 2\lvert J'(x) \rvert = \lvert1+J(x)\rvert^{2}\lvert\mathcal{S}'\sigma_{-1}(x)\rvert.
            \end{equation*}
            Using the first estimate in the middle third and the second estimate in the left and right thirds of the interval $(\lambda_{n}, \lambda_{n+1})$ gives the upper bound. Indeed, consider $x$ in the left or right third of $(\lambda_{n},\lambda_{n+1})$, then
            \begin{equation*}
                \begin{split}
                    \frac{2\lvert J'(x)\rvert}{\lvert1+J(x)\rvert^{2}} = \frac{1}{\pi}\sum_{k} \frac{\sigma_{-1}(\left\{ \omega_{k} \right\}}{(x-\omega_{k})^{2}} \lesssim \sum_{k\neq n} \frac{\alpha'(\omega_{k})^{-1}}{(x-\omega_{k})^{2}} \\ \sim \left( \int_{\omega_{n+1}}^{\infty}\frac{dt}{(x-t)^{2}} + \int_{-\infty}^{\omega_{n-1}}\frac{dt}{(x-t)^{2}}\right) \sim \lvert \alpha'(\omega_{n}) \rvert.
                \end{split}
            \end{equation*}
            In the middle third we use the representation
            \begin{equation*}
                \frac{2\lvert J'(x) \rvert}{\lvert1-J(x)\rvert^{2}} = \frac{1}{\pi} \sum_{k} \frac{\sigma_{1}(\lambda_{k})}{(x-\lambda_{k})^{2}} \sim \frac{1}{\pi}\sum_{k} \frac{\sigma_{1}(\lambda_{k})}{(\omega_{n}-\lambda_{k})^{2}} = \lvert J'(\omega_{n})\rvert.
            \end{equation*}
            It remains to prove \eqref{eq:est_below} and \eqref{eq:est_sigmas}. We isolate this as a lemma.
            \end{proof}
        \end{theorem}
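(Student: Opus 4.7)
The plan is a detailed boundary asymptotic analysis of the Schwartz integral representation \eqref{eq:def_J} at the distinguished points $\lambda_n$ and $\omega_n$, taking advantage of the different local symmetries of $u$ at these two types of points, and then propagating the resulting pointwise bounds across the line via the Aleksandrov-Clark sum representation already set up in the proof.

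To obtain \eqref{eq:est_below}, I substitute $z = \lambda_n+iy$ into \eqref{eq:def_J} and send $y\to 0^+$. The left-hand side has real part $\log 2 - \log|J'(\lambda_n)| - \log y + o(1)$, arising from the simple zero of $1-J$ at $\lambda_n$. On the right, the local portion of the integral on $(\omega_{n-1},\omega_n)$ can be computed in closed form as $-\log y + \tfrac{1}{2}[\log(\omega_n-\lambda_n) + \log(\lambda_n-\omega_{n-1})] + o(1)$, so the $-\log y$ singularities match and the identity
\begin{equation*}
\log|J'(\lambda_n)| = \log\alpha'(\lambda_n) - A_n + O(1)
\end{equation*}
emerges, where $A_n$ is the (convergent) tail integral $\int_{(\omega_{n-1},\omega_n)^c}[1/(t-\lambda_n) - t/(1+t^2)]u(t)\,dt$ and I have used $\omega_n-\lambda_n \sim \lambda_n-\omega_{n-1} \sim 1/\alpha'(\lambda_n)$. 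The modified kernel $1/(t-\lambda_n) - t/(1+t^2) = O(1/t^2)$ at infinity, combined with $\|u\|_\infty \leq 1/2$, gives $|A_n| \lesssim \log\langle\lambda_n\rangle + \log\alpha'(\lambda_n)$ by the same type of splitting used in Lemma \ref{lemma:est_H_transform}, where the two logarithms correspond respectively to the far-field and the near-field cutoff at scale $1/\alpha'(\lambda_n)$.

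The same asymptotic analysis at $\omega_n$ (now using $J(\omega_n) = -1$) yields
\begin{equation*}
\log|J'(\omega_n)| = \log\alpha'(\omega_n) - A'_n + O(1),
\end{equation*}
and here the crucial structural fact is that $\omega_n = (\lambda_n+\lambda_{n+1})/2$ is the exact midpoint, so $u$ is antisymmetric about $\omega_n$ on $(\lambda_n,\lambda_{n+1})$, and more generally about each $\omega_k$ on $(\lambda_k,\lambda_{k+1})$. Writing $A'_n$ as a sum $\sum_{k\neq n}\int_{\lambda_k}^{\lambda_{k+1}}[1/(t-\omega_n) - t/(1+t^2)]u(t)\,dt$ and Taylor-expanding $1/(t-\omega_n)$ about $t=\omega_k$, the zeroth-order term annihilates by antisymmetry; the leading surviving contribution is of order $(\lambda_{k+1}-\lambda_k)^2/(\omega_n-\omega_k)^2$, and by the $\alpha$-doubling structure together with the standing lower bound $\alpha'(x)\gtrsim\langle x\rangle^{-\kappa}$, $\kappa<1$, the resulting series converges absolutely to $O(1)$ uniformly in $n$. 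Thus $A'_n = O(1)$ and $|J'(\omega_n)| \sim \alpha'(\omega_n)$, yielding both $\sigma_{-1}(\{\omega_n\}) \lesssim \alpha'(\omega_n)^{-1}$ and the polynomial upper bound in \eqref{eq:est_sigmas} (with $N_0 = 1$ already sufficient).

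I expect the main technical hurdle to be the uniform convergence argument in the antisymmetric case at $\omega_n$: one must bound $\sum_k (\lambda_{k+1}-\lambda_k)^2/(\omega_n-\omega_k)^2$ independently of $n$, which requires exploiting both the local $\alpha$-doubling (to handle $k$ close to $n$) and the growth hypothesis $\alpha'(x)\gtrsim\langle x\rangle^{-\kappa}$ with $\kappa<1$ (which is exactly what makes the associated tail integral $\int 1/(\alpha'(t)\,t^2)\,dt$ finite). A secondary but genuinely bookkeeping difficulty is the two-scale estimate of $A_n$, which parallels Lemma \ref{lemma:est_H_transform} but must track the scales $1/\alpha'(\lambda_n)$ and $\langle\lambda_n\rangle$ separately to produce the two logarithmic factors.
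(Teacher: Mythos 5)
Your overall strategy is essentially the paper's: compute the residue of the exponential representation at $\omega_n$ by splitting into a local integral over $(\lambda_n,\lambda_{n+1})$ (giving the $1/\alpha'$ factor) plus a tail, then Taylor-expand each tail piece about $\omega_k$ and use the midpoint antisymmetry of $u$ to kill the zeroth-order term, leaving the quadratic decay $\delta_k^2/(\omega_n-\omega_k)^2$. Your crude two-scale bound on the tail at $\lambda_n$ (no cancellation, just $\|u\|_\infty\le 1/2$ and the Poisson-corrected kernel) also works.

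The genuine gap is the assertion that $\sum_{k\neq n}\delta_k^2/(\omega_n-\omega_k)^2$ is $O(1)$ uniformly in $n$, and the consequent claim $|J'(\omega_n)|\sim\alpha'(\omega_n)$ with $N_0=1$. This fails for general regular locally doubling weights. The hypotheses permit $\alpha'$ to have local spikes: take $\alpha'(t)=L/(1+L|t-\omega_n|)$ on a unit neighborhood of $\omega_n$ and $\alpha'\sim 1$ elsewhere. Then $|\alpha''|=\alpha'^2$, the doubling constant in property (i) is $e$, and $\alpha'\gtrsim 1$, so every hypothesis holds with $\kappa=0$. With this profile the $\alpha$-equispaced points satisfy $\delta_k\sim(1+L\lambda_k)/L$ and $|\omega_k-\omega_n|\sim\lambda_k$, so each term of the sum with $1/L\lesssim\lambda_k\lesssim 1$ contributes $\sim 1$, and there are $\sim\log L=\log\alpha'(\omega_n)$ such $k$. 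The tail exponent is therefore $\gtrsim\log\alpha'(\omega_n)$, not $O(1)$, and after exponentiation one only gets a polynomial loss in $\alpha'(\omega_n)$. Your heuristic "doubling plus $\alpha'\gtrsim\langle x\rangle^{-\kappa}$ gives $O(1)$" is valid only for exact power weights, where $\alpha'(t)\sim\langle\omega_n\rangle^{-\kappa}$ uniformly on a Euclidean neighborhood of $\omega_n$; in general the doubling condition allows $\alpha'$ to decay from $\alpha'(\omega_n)$ down to the floor $\langle\omega_n\rangle^{-\kappa}$ over a short Euclidean range, and that decay costs a logarithm.

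This is precisely why the paper's integration-by-parts step is not optional: comparing the tail sum to $\int dt/(\alpha'(t)(t-\omega_n)^2)$ and integrating by parts produces the boundary terms (of size $O(1)$) plus the bulk term $\int\frac{\alpha''}{(\alpha')^2}\frac{dt}{t-\omega_n}$, which the regularity condition $|\alpha''|\lesssim(\alpha')^2$ bounds by a logarithm of the ratio of scales, not by $O(1)$. To repair your proposal, replace the uniform $O(1)$ assertion with a bound of the form $O(\log\langle\omega_n\rangle+|\log\alpha'(\omega_n)|)$ — either via the paper's IBP or by the spike analysis above — and absorb the logarithms into the exponent $N_0$ after exponentiating. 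The theorem as stated only needs this polynomial bound, so your structure survives; the claim $N_0=1$ does not.
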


            \begin{proof}[Proof of equations \eqref{eq:est_below} and \eqref{eq:est_sigmas}]
            We begin with \eqref{eq:est_sigmas}. Note that
            \begin{equation*}
                \frac{1-J}{1+J} = C\exp\left( -Ku(z) \right),
            \end{equation*}
            where $C$ is a constant and $Ku$ is the improper integral
            \begin{equation*}
                Ku(z) = \int\frac{1}{t-z}u(t)dt.
            \end{equation*}
            To compute the derivative of $J$ at $\omega_{n}$ we compute the residue of of $\exp(-Ku(z))$ at $\omega_{n}$
            \begin{equation*}
                 \left\lvert \frac{2}{J'(\omega_{n})} \right\rvert = C\lvert\text{Res}(\exp(-Ku))(\omega_{n})\rvert.
            \end{equation*}          
            The residue of $\exp(-Ku)$ can be estimated as follows. Set
            \begin{equation*}
                A_{n} = \exp\left( -\int_{\mathbb{R}\setminus (\lambda_{n},\lambda_{n+1})} \frac{u(t)}{t-\omega_{n}}\right),
            \end{equation*}
            and
            \begin{equation*}
                \lvert g_{n}(z)\rvert = \left\lvert \exp\left( -\int_{\lambda_{n}}^{\lambda_{n+1}} \frac{u(t)}{t-z}\right)\right\rvert = \frac{\lvert\lambda_{n}-z\rvert^{1/2} \lvert\lambda_{n+1}-z\rvert^{1/2}}{\lvert\omega_{n}-z\rvert}.
            \end{equation*}
            Then
            \begin{equation*}
                \lvert \text{Res}_{\lambda_{n}}(-Ku)(\omega_{n}) \rvert = \lvert A_{n} \rvert \lvert\lambda_{n+1}-\omega_{n}\rvert^{1/2}\lvert \lambda_{n}-\omega_{n}\rvert^{1/2}.
            \end{equation*}
            Since the $\lambda_{n}'s$ are $\alpha$-uniformly dense and $\alpha$-uniformly separated and $\alpha'$ is a regular locally doubling measure
            \begin{equation*}
                \lvert\lambda_{n+1}-\omega_{n}\rvert^{1/2}\lvert \lambda_{n}-\omega_{n}\rvert^{1/2} \sim \frac{1}{\alpha'(\lambda_{n})},
            \end{equation*}
            with constants independent of $n$. We now turn to bounding $A_{n}$. First we claim that $A_{n}$ is bounded above. Indeed, for each term of the form
            \begin{equation*}
                -2\int_{\lambda_{j}}^{\lambda_{j+1}} \frac{u(t)}{t-\omega_{n}} = \log\left( \frac{\lambda_{j}-\omega_{n}}{\omega_{j}-\omega_{n}}\right) + \log\left( \frac{\lambda_{j+1}-\omega_{n}}{\omega_{j}-\omega_{n}}\right) \leq 0,
            \end{equation*}
            since the last inequality is equivalent to
            \begin{equation*}
                \omega_{j}^{2} \geq \lambda_{j+1}\lambda_{j}.    
            \end{equation*}
            Hence, we obtain $A_{n} \leq 1$.

            We now bound $A_{n}$ from below. We assume that $\lambda_{n} > 1$. The negative $\lambda_{n}'s$ are treated similarly. For $j > n$ We compute
            \begin{equation*}
                \begin{split}
                    2\int_{\lambda_{j}}^{\lambda_{j+1}} \frac{u(t)dt}{t-\omega_{n}} = \log(\frac{\omega_{j}-\omega_{n}}{\lambda_{j}-\omega_{n}}) - \log(\frac{\lambda_{j+1}-\omega_{n}}{\omega_{j}-\omega_{n}}) \\= -\frac{\lambda_{j}-\omega_{j}}{\omega_{j}-\omega_{n}} + O\left(\frac{(\lambda_{j}-\omega_{j})^{2}}{(\omega_{j}-\omega_{n})^{2}}\right)- \frac{\lambda_{j+1}-\omega_{j}}{\omega_{j}-\omega_{n}} + O\left(\frac{(\lambda_{j+1}-\omega_{j})^{2}}{(\omega_{j}-\omega_{n})^{2}}\right)\\ = O\left( \frac{\delta_{j}^{2}}{(\omega_{j}-\omega_{n})^{2}}\right),
                \end{split}
            \end{equation*}
            where $\delta_{j} = \omega_{j}-\lambda_{j}$ and we have used the expansion $\log(1+x) = x +O(x^{2})$. Thus
            \begin{equation*}
                \begin{split}
                    2\int_{\lambda_{n+1}}^{\infty} \frac{u(t)dt}{t-\omega_{n}} = \sum_{j > n} \int_{\lambda_{j}}^{\lambda_{j+1}}\frac{u(t)dt}{t-\omega_{n}} \lesssim \sum_{j > n} \frac{\delta_{j}^{2}}{(\omega_{j}-\omega_{n})^{2}}.
                \end{split}
            \end{equation*}
            Using that $\alpha'$ is locally doubling we bound the sum as follows 
            \begin{equation*}
                \sum_{j > n} \frac{\delta_{j}^{2}}{(\omega_{j}-\omega_{n})^{2}} = \frac{1}{2}\sum_{j > n} \int_{\lambda_{j}}^{\lambda_{j+1}} \frac{\delta_{j}}{(\omega_{j}-\omega_{n})^{2}} \lesssim \int_{\lambda_{n+1}}^{\infty} \frac{dt}{\alpha'(t)(t-\omega_{n})^{2}}.
            \end{equation*}
            We split the last integral into two parts
            \begin{equation*}
                \int_{\lambda_{n+1}}^{\infty} \frac{dt}{\alpha'(t)(t-\omega_{n})^{2}} = \int_{\lambda_{n+1}}^{\lambda_{n}+\omega_{n}} \frac{dt}{\alpha'(t)(t-\omega_{n})^{2}} + \int_{\lambda_{n}+\omega_{n}}^{\infty} \frac{dt}{\alpha'(t)(t-\omega_{n})^{2}} = I_{1} + I_{2}.
            \end{equation*}
            The first integral we bound using integration by parts, $\lvert \alpha''(x) \rvert \lesssim \alpha'(x)^{2}$
            \begin{equation*}
                \begin{split}
                    I_{1} = \left(\frac{1}{\alpha'(\lambda_{n+1})(\lambda_{n+1}-\omega_{n})}-\frac{1}{\alpha'(\lambda_{n}+\omega_{n})\lambda_{n}}\right) - \int_{\lambda_{n+1}}^{\omega_{n}+\lambda_{n}} \frac{\alpha''(t)}{\alpha'(t)^{2}}\frac{1}{t-\omega_{n}}dt \\ \lesssim \log(\lambda_{n})+ \lvert\log(2\delta_{n})\rvert.
                \end{split}
            \end{equation*}
            The second integral $I_{2}$ is simpler to bound (we only use that $\alpha' \gtrsim \langle x\rangle^{-\kappa}$)
            \begin{equation*}
                I_{2} \leq \int_{\omega_{n}+\lambda_{n}}^{\infty}\frac{t^{\kappa}}{(t-\omega_{n})^{2}} \lesssim \frac{1}{\lambda_{n+1}^{1-\kappa}}.
            \end{equation*}
            The case when $j < n$ is similar. All in all we obtain a bound of the form
            \begin{equation*}
                \langle \delta_{n} \lambda_{n} \rangle^{-K_{0}}\lesssim A_{n} \lesssim 1.
            \end{equation*}
            Thus we obtain
            \begin{equation*}
                \alpha'(\omega_{n}) \lesssim \lvert J'(\omega_{n}) \rvert \lesssim \langle \omega_{n} \rangle^{N_{0}} \alpha'(\omega_{n})^{N_{0}}.
            \end{equation*}
            Which is equation \eqref{eq:est_sigmas}. To obtain the lower bound for $J'(\lambda_{n})$ we instead consider
            \begin{equation*}
                \frac{1+J}{1-J} = C \exp(Ku).
            \end{equation*}
            The only difference is that the lower bound is the worse bound since for the corresponding $A_{n}$ we have $A_{n} \geq 1$. Exactly the same argument shows that
            \begin{equation*}
                \langle\lambda_{n} \rangle^{-N} \alpha'(\lambda_{n})^{-N} \lesssim \lvert J'(\lambda_{n})\rvert,
            \end{equation*}
            which is \eqref{eq:est_below}.
    \end{proof}

    Theorem \ref{thm:main} now follows as a corollary.

    \begin{proof}[Proof of Theorem \ref{thm:main}]
            The result follows from the previous theorem after setting
            \begin{equation*}
                \Lambda = \left\{ \lambda \in \mathbb{R} : \alpha(\lambda) = 0 \mod 2\pi \right\}.
            \end{equation*}
    \end{proof}
    
    \section{Zero sets in Toeplitz kernels}\label{sec:zero_sets}

        We shall use the following characterization of functions absolute values of functions in Toeplitz kernels. In this section all symbols are real analytic and unimodular.

        Applying Lemma \ref{lemma:modulus_of_Smirnov_class} to $\arg(V)= -\alpha+h + \arg(J) + 2\epsilon x$ we obtain that $m$ is the absolute value of a $N^{+}(JUS_{2\epsilon})$ function if and only if
        \begin{equation*}\label{eq:to_be_solved}
            2\widetilde{\log m} = \alpha - h- 2\epsilon x-\arg(J) - \arg(I) + 2\pi n.
        \end{equation*}
        Assuming that $m$ is smooth the integer valued function $n$ is constant and hence may be baked in to the argument of the inner function $I$. The function $m$ will be constructed by obtaining sufficiently good bounds on the Hilbert transform of the right hand-side.

        \subsection{A preparatory lemma}
        
    In order to prove our theorem we shall need to regularize the set $\Lambda$ to not contain large gaps.
    
    Recall the definition of $\alpha$-Beurling density as
    \begin{equation*}
        D^{+}_{\alpha}(\Lambda) = \lim_{r \to \infty} \sup_{\alpha(I) = r} \frac{\#(\Lambda \cap I)}{r},
    \end{equation*}
    and
    \begin{equation*}
        \alpha(I) = \int_{I} \alpha'(x)dx.
    \end{equation*}
    For a discrete set $\Lambda \subset \mathbb{R}$ we denote by $n_{\Lambda}$ the counting function of $\Lambda$, i.e. the piece-wise constant function which is zero at $0$ and increases by one at each $\lambda \in \Lambda$.
    \begin{lemma}\label{lemma:regularization_of_lambda}
        Let $\alpha$ be a regular locally doubling measure and $\Lambda \subset \mathbb{R}$ be an $\alpha$-uniformly discrete set. Suppose $D^{+}_{\alpha}(\Lambda) < 1/2\pi$. Then for sufficiently small $\delta > 0$ there exists an $\alpha$-uniformly discrete and $\alpha$-uniformly dense set $\Lambda \subset \Lambda'$ such that $(1-\delta)\alpha-2\pi n_{\Lambda'}$ is bounded.
        \begin{proof}
            Fix a positive integer $N_{0} > 0$ and $\delta > 0$, such that
            \begin{equation*}
                \#(\Lambda \cap I) < (1-\delta)\frac{\alpha(I)}{2\pi},
            \end{equation*}
            for all intervals $I$ of $\alpha$-length $\alpha(I)  \geq N_{0}$. Now we decompose the real line into disjoint intervals of $\alpha$-length $N_{0}$, $I_{j} = [a_{j}, a_{j+1})$, $j \in \mathbb{Z}$. That is $d_{\alpha}(a_{j+1},a_{j}) = N_{0}$. Now we add points to each interval keeping the set $\alpha$-uniformly separated and such that
            \begin{equation*}
                \#(\Lambda \cap I_{j}) = \left[ (1-\delta)\frac{\alpha(I_{j})}{2\pi} \right].
            \end{equation*}
            Now we may inductively add one extra point (keeping the uniform separation) to the intervals $I_{j}$ if necessary to ensure that
            \begin{equation*}
                (1-\delta)\alpha(a_{j}) \geq 2\pi n_{\Lambda'}(a_{j}) \geq (1-\delta)\alpha(a_{j}) - 1.
            \end{equation*}
            By construction $\Lambda'$ is $\alpha$-uniformly separated and $\alpha$-uniformly dense. The previous inequality shows that $(1-\delta)\alpha-2\pi n_{\Lambda'}$ is bounded.
        \end{proof}
    \end{lemma}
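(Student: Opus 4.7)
The plan is to exploit the strict inequality $D^+_\alpha(\Lambda)<1/2\pi$ to produce enough slack in every sufficiently long interval to both fatten $\Lambda$ to an $\alpha$-uniformly dense set and to correct the counting function globally. The key observation is that the density hypothesis is qualitative (a limit), so there exist $\delta>0$ and $N_0$ such that $\#(\Lambda\cap I)<(1-2\delta)\alpha(I)/(2\pi)$ for every interval $I$ with $\alpha(I)\geq N_0$; I would take $N_0$ large enough so that, on top of this, the number $\lfloor(1-\delta)N_0/(2\pi)\rfloor$ is much larger than the initial count in each piece of the partition I build.

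First I partition $\mathbb R$ into consecutive $\alpha$-intervals $I_j=[a_j,a_{j+1})$ with $\alpha(a_{j+1})-\alpha(a_j)=N_0$. In each $I_j$, I insert new points while maintaining the original $\alpha$-separation of $\Lambda$, so that the total count becomes exactly $M_j:=\lfloor(1-\delta)\alpha(I_j)/(2\pi)\rfloor$. This is possible: the interval has $\alpha$-length $N_0$, I have at most a fixed number $M_j\lesssim N_0$ of points to place, and the existing points of $\Lambda\cap I_j$ are separated by at least the original separation constant $\eta>0$; by choosing $N_0$ so large that $M_j\cdot\eta/2\leq N_0$, there is room to place the new points with $\alpha$-separation at least, say, $\eta/2$. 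This yields $\alpha$-uniform separation with a new constant depending only on $\eta$, $\delta$ and $N_0$, and $\alpha$-uniform density with bound $N_0$, since every $I_j$ now contains points.

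To control the counting function globally I track the running discrepancy $e_j=(1-\delta)\alpha(a_j)-2\pi n_{\Lambda'}(a_j)$. After filling each $I_j$ as above, the increment of $e_j$ across $I_j$ is $(1-\delta)N_0-2\pi M_j\in[0,2\pi)$, so $e_j$ is monotone nondecreasing and grows by less than $2\pi$ per step. Whenever $e_j$ would exceed a fixed threshold (say $4\pi$), I add one extra point inside $I_j$, which decreases the increment by $2\pi$ and hence drops $e_j$ back inside $[0,4\pi)$; by choosing $N_0$ large enough this extra point can always be inserted without violating the separation bound, since $M_j+1$ points still fit with $\alpha$-separation $\geq\eta/2$. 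By induction $e_j$ stays in a bounded interval for every $j$, and since $n_{\Lambda'}$ and $\alpha$ vary by $O(1)$ on each $I_j$, the function $(1-\delta)\alpha-2\pi n_{\Lambda'}$ is bounded on all of $\mathbb R$.

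The only genuine difficulty is the simultaneous control of separation, density, and the counting function: all three constraints must be honored by a single choice of $N_0$. The argument above works because the gap between $D^+_\alpha(\Lambda)$ and $1/2\pi$ is strict, so the tolerance $\delta$ can be fixed first, and $N_0$ is then chosen last, large enough that (i) the asymptotic density bound holds on every $I_j$, (ii) there is geometric room to insert $M_j+1$ points with separation comparable to $\eta$, and (iii) the occasional correction step costs only a bounded perturbation.
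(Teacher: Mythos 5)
Your proof is correct and follows essentially the same route as the paper: partition $\mathbb{R}$ into $\alpha$-intervals of length $N_0$, fill each to the target count $\lfloor(1-\delta)\alpha(I_j)/2\pi\rfloor$, then inductively insert occasional extra points to keep the discrepancy $(1-\delta)\alpha - 2\pi n_{\Lambda'}$ bounded. Your write-up is somewhat more explicit than the paper's about why the per-interval increment lies in $[0,2\pi)$ and why the correction step can be performed without destroying separation, but the underlying construction is identical.
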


    \subsection{Construction of a function with prescribed zeros}
    
    We are now ready to construct the vanishing function when the phase function is a locally doubling weight.
        \begin{proof}[Proof of Theorem \ref{thm:zero_function}]
            Using Lemma \ref{lemma:regularization_of_lambda} for $\delta > 0$ sufficiently small we construct an an $\alpha$-discrete set $\Lambda'$ containing $\Lambda$, such that
            \begin{equation*}
                (1-\delta)\alpha -2\pi n_{\Lambda'}, \; \text{ is bounded},   
            \end{equation*}
            and $\Lambda' \setminus \Lambda$ is infinite. Using Theorem \ref{thm:construction_inner_function} we construct a meromorphic inner function $J$, such that
            \begin{equation*}
                \left\{ x \in \mathbb{R} : J(x) = 1 \right\} = \Lambda',
            \end{equation*}
            and
            \begin{equation*}
            \lvert J'(x) \rvert \lesssim \langle x\rangle^{K_{0}}, \text{ and } \langle \lambda \rangle^{-K_{0}} \lesssim\lvert J'(\lambda) \rvert, \text{ for all } \lambda \in \Lambda'.
            \end{equation*}
            Then $\arg(J)-2\pi n_{\Lambda'}$ is bounded.

            Let $m$ be a locally summable function, such that $\log(m) \in L^{1}(\mathbb{P})$. As we have seen $m$ is equal to the absolute value of a $N^{+}(JUS_{2\epsilon})$ function if there exists an inner function $I$, such that
            \begin{equation*}
                2\widetilde{\log m} = \alpha + h -\arg(J) - 2\epsilon x - \arg(I).
            \end{equation*}
            By construction $(1-\delta)\alpha - \arg(J)$ is a bounded function. For sufficiently small $\epsilon$ we have $\delta \alpha' - 2\epsilon > 0$. Thus it follows from Theorem \ref{thm:construction_inner_function} that there exists a meromorphic inner function $I$, such that $\delta \alpha -2\epsilon x - \arg(I)$ is bounded and $\arg(I)' \lesssim \langle x \rangle^{K_{0}}$. Define $m$ by
            \begin{equation*}
                - 2\log m = \tilde{h} + \widetilde{\left((1-\delta)\alpha - \arg(J)\right)} + \widetilde{\left(\delta \alpha-2\epsilon x - \arg(I)\right)}.
            \end{equation*}
            Thus there exists $G \in N^{+}(JUS_{2\epsilon})$, such that
            \begin{equation*}
                \lvert G \rvert = m.
            \end{equation*}
            It remains to prove the desired estimates. The function
            \begin{equation*}
                u = \frac{\alpha + h -\arg(J) - 2\epsilon x - \arg(I)}{2},
            \end{equation*}
            is bounded and satisfies $\lvert u'(x) \rvert\leq C\langle x \rangle^{N}$ for some $C$ and $N$, Hence by Lemma \ref{lemma:est_H_transform} its Hilbert transform satisfies $\lvert \widetilde{u}'(x) \rvert \lesssim \langle x \rangle^{N}$, $N \geq 1$. Set $M = \norm{u}_{\infty}$. Then
            \begin{equation*}
                m(x) = \left( e^{\widetilde{u/M}}\right)^{M}.
            \end{equation*}
            We shall estimate $q = \exp(\widetilde{u/M})$. We write
            \begin{equation*}
                \lvert q(x) \rvert \leq \lvert q(0) \rvert + \int_{0}^{x} \frac{1}{M}\lvert \widetilde{u}'(t) \rvert e^{\widetilde{u/M}}dt \leq \lvert q(0) \rvert + \frac{C}{M}\langle x \rangle^{N+2}\int_{-\infty}^{\infty}e^{\lvert \widetilde{u/M} \rvert}d\mathbb{P}(t).
            \end{equation*}
            The last integral is finite since $u/M$ is bounded by $1$ (the Zygmund estimate, see Corollary 2.6. in Chapter 3 of \cite{MR2261424}). Similarly,
            \begin{equation*}
                \begin{split}
                    \lvert q(x)^{-1} \rvert = e^{-\widetilde{u/M}} \leq \lvert q(0)^{-1} \rvert + \int_{0}^{x} \frac{1}{M} \lvert \widetilde{u}'(t) \rvert e^{-\widetilde{u/M}}dt \\ \leq \lvert q(0)^{-1} \rvert + \frac{C}{M}\langle x \rangle^{N+2}\int_{-\infty}^{\infty}e^{\lvert \widetilde{u/M} \rvert}d\mathbb{P}(t).         
                \end{split}
            \end{equation*}
            Thus we have
            \begin{equation*}
                \langle x \rangle^{-N} \lesssim m(x) \lesssim \langle x \rangle^{N}.
            \end{equation*}
            It remains to fix the polynomial growth at infinity. This can be done by dividing out zeros of the function $1-J$ at points in $\Lambda' \setminus \Lambda$. The function
            \begin{equation*}
                T = c(1-J)G
            \end{equation*}
            satisfies all the requirements.
        \end{proof}

    \subsection{Necessity of the density condition}

    For our necessary condition we shall need the notion of uniform lower density.
    \begin{equation*}
        D_{\alpha}^{-}(\Lambda) = \lim_{r \to \infty}\inf_{\alpha(I) = r} \frac{\#(\Lambda \cap I)}{r},
    \end{equation*}
    where the infimum is taken over all intervals $I$ of $\alpha$-length $r$. Here we will give a simple argument that the condition $D^{+}(\Lambda) < 1/2 \pi$ is not too far from optimal.
    \begin{theorem}
        Let $U = \exp(i(-\alpha+h))$ be unimodular symbol with $\alpha,  h\in C^{\omega}$. $\alpha'  \gtrsim 1$ and $h \in L^{\infty}$. Suppose $D^{-}_{\alpha}(\Lambda) > 1/2\pi$. Then $\Lambda$ is not a zero set of $N^{+}(U)$.
        \begin{proof}
            Let $\delta > 0$ and $N_{0} > $ be such that for all intervals of length $\alpha(I) \geq N_{0}$ we have
            \begin{equation*}
                2\pi\# \left( \Lambda \cap I \right) > (1+\delta)\alpha(I),
            \end{equation*}
            for all interval $I$, such that $\alpha(I) \geq N_{0}$. Let $\left\{a_{j} \right\}_{j}$ be an increasing sequence, such that $d_{\alpha}(a_{j+1},a_{j}) = N_{0}$ and $a_{0} = 0$. Then
            \begin{equation*}
                2\pi \#\left( \Lambda \cap (a_{j},a_{j+1}]\right) \geq (1+\delta)\alpha(I_{j}).
            \end{equation*}
            Summing from $j = 0$ to $j = n-1$ gives
            \begin{equation*}
                2\pi n_{\Lambda}(a_{n}) - \alpha(a_{n}) > \delta\alpha(a_{n}) \geq \delta\epsilon a_{n},
            \end{equation*}
            for sufficiently small $\epsilon > 0$. Thus the left hand-side is not the sum of a Hilbert transform of an $L^{1}(\mathbb{P})$ function and a decreasing function. The result follows from the next lemma. Very similar results appear in \cite{MR2215727}.
        \end{proof}
    \end{theorem}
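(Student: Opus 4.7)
The plan is to argue by contradiction. Suppose for contradiction that there exists a non-trivial $f \in N^{+}(U)$ with $f(\lambda) = 0$ for every $\lambda \in \Lambda$.

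The first step is to extract the quantitative consequence of the density hypothesis. Choose $\delta > 0$ and $N_{0}$ so that
\[
2\pi \#(\Lambda \cap I) \geq (1+\delta)\alpha(I)
\]
for every interval $I$ with $\alpha(I) \geq N_{0}$; this is exactly the definition of $D^{-}_{\alpha}(\Lambda) > 1/(2\pi)$. Partition $[0,\infty)$ into consecutive intervals $I_{j} = (a_{j},a_{j+1}]$ with $d_{\alpha}(a_{j+1},a_{j}) = N_{0}$ and $a_{0}=0$, apply the inequality on each $I_{j}$, and telescope to get
\[
2\pi n_{\Lambda}(a_{n}) - \alpha(a_{n}) \geq \delta\,\alpha(a_{n}).
\]
Since $\alpha' \gtrsim 1$ we have $\alpha(a_{n}) \gtrsim a_{n}$, so the \emph{counting defect} $2\pi n_{\Lambda}(x) - \alpha(x)$ grows at least linearly along the sequence $\{a_{n}\}$.

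The second step is to translate the vanishing of $f$ on $\Lambda$ into an identity on the real line. Apply Lemma \ref{lemma:modulus_of_Smirnoff_class} to $m = |f|$ and $V = U$: since $\log|f| \in L^{1}(\mathbb{P})$, there exists a meromorphic inner function $I$ and an integer-valued $n$ with
\[
2\widetilde{\log|f|} - \alpha + h = -\arg(I) + 2\pi n.
\]
Because $f$ vanishes at each $\lambda \in \Lambda$, the part of $I$ responsible for these zeros contributes to $\arg(I)$ a piece of size $2\pi n_{\Lambda}$ modulo a bounded term; writing $\arg(I) = 2\pi n_{\Lambda} + \arg(I_{1}) + O(1)$ with $I_{1}$ a meromorphic inner function (so $\arg(I_{1})$ increasing, $-\arg(I_{1})$ decreasing), the identity becomes
\[
2\pi n_{\Lambda} - \alpha = -h + 2\widetilde{\log|f|} - \arg(I_{1}) + 2\pi n + O(1).
\]

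Finally, I would show that the right-hand side cannot grow linearly at $+\infty$: $h$ is bounded, $-\arg(I_{1})$ is non-increasing, the integer jumps in $n$ are multiples of $2\pi$ that are already absorbed into $2\pi n_{\Lambda}$, and $\widetilde{\log|f|}$ -- the conjugate function of an $L^{1}(\mathbb{P})$ density -- can have at most logarithmic growth coming from the regularising term $t/(1+t^{2})$ in the Hilbert kernel (in the spirit of Lemma \ref{lemma:est_H_transform}). Such a sum cannot satisfy the linear lower bound $\gtrsim a_{n}$ derived in the first step, giving the contradiction. The main obstacle is the second step: one must cleanly separate the contribution to $\arg(I)$ coming from the real zeros indexed by $\Lambda$ from contributions of other zeros, the singular inner part, and the outer argument, so that the residual piece is genuinely decreasing plus a Hilbert transform of an $L^{1}(\mathbb{P})$ function. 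This bookkeeping is precisely the content of the auxiliary lemma the authors postpone, and it is also the place where the similarity with the arguments in \cite{MR2215727} enters.
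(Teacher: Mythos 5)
Your step (1) is exactly the paper's argument: the same quantitative extraction of $\delta$ and $N_{0}$ from $D^{-}_{\alpha}(\Lambda)>1/2\pi$, the same telescoping along a partition $\{a_{j}\}$ with $d_{\alpha}(a_{j+1},a_{j})=N_{0}$, and the same passage from $\alpha'\gtrsim 1$ to linear growth of $2\pi n_{\Lambda}-\alpha$. Step (3) is also the same endgame the paper envisions: a decreasing term plus a conjugate function cannot grow linearly.

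The gap is in step (2), and it is a genuine one. You apply Lemma \ref{lemma:modulus_of_Smirnoff_class} directly to $m=|f|$ with $V=U$ and then assert $\arg(I)=2\pi n_{\Lambda}+\arg(I_{1})+O(1)$ on the grounds that ``the part of $I$ responsible for these zeros contributes $2\pi n_{\Lambda}$.'' But the inner function $I$ produced by that lemma satisfies $\overline{UF}=IF$ for the outer part $F$ of $f$; it is determined by $\alpha$, $h$ and $|f|$, and it does \emph{not} carry the real zeros of $f$ as a divisor. Those zeros live in the outer part $F$ (outer functions can vanish on $\mathbb{R}$), and what they actually produce in the lemma's identity is the \emph{integer-valued function} $n$: each zero of $f$ gives a $\pm\pi$ jump of $\widetilde{\log|f|}$, forcing $n$ to jump by $\mp 1$ there. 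Since $\arg(I)$ is continuous and increasing, a decomposition $\arg(I)=2\pi n_{\Lambda}+\arg(I_{1})+O(1)$ with $I_{1}$ inner is impossible: subtracting the step function $2\pi n_{\Lambda}$ from a continuous increasing function cannot leave an increasing function up to $O(1)$. The paper sidesteps this entirely by a different device: divide $f$ by $1-J$, where $J$ is a meromorphic inner function with $\{J=1\}=\Lambda$. Then $f/(1-J)\in N^{+}(UJ)$ and has no zeros on $\Lambda$, so the non-triviality criterion $UJ=\overline{I}\,\overline{H}/H$ (from \cite{MR2215727}, Prop.\ 3.13) applies cleanly. The $2\pi n_{\Lambda}$ term now comes from $\arg(J)=2\pi n_{\Lambda}+O(1)$, not from bookkeeping inside the $\arg(I)$ of Lemma \ref{lemma:modulus_of_Smirnoff_class}. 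So the auxiliary lemma you gesture at does not ``separate contributions to $\arg(I)$'' --- it reformulates the problem so that no such separation is needed. Finally, your appeal to Lemma \ref{lemma:est_H_transform} for the growth of $\widetilde{\log|f|}$ is off-target: that lemma is about bounded Lipschitz data, whereas $\log|f|$ is merely in $L^{1}(\mathbb{P})$ (and unbounded near the zeros); the growth bound for conjugates of $L^{1}(\mathbb{P})$ functions used here is a different, weaker statement, which the paper attributes to \cite{MR2215727}.
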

    \begin{lemma}
        Let $\arg U=-\alpha + h$ with $\alpha, h \in C^{\omega}$, $\alpha' >0$ and $h$ bounded. Then $\Lambda$ is a zero set of $N^{+}(U)$ if and only if
        \begin{equation*}
            2\pi n_{\Lambda} - \alpha = -\gamma + \widetilde{u},
        \end{equation*}
        where $u \in L^{1}(\mathbb{P})$ and $\gamma$ is the argument of a meromorphic inner function.
        \begin{proof}
            First we claim that $N^{+}(U)$ contains a non-trivial function vanishing on $\Lambda \subset \mathbb{R}$ if and only if for one/any meromorphic inner function $J$ with $\Lambda = \left\{ x \in \mathbb{R} : J(x) = 1\right\}$ the Toeplitz kernels $N^{+}(JU)$ is non-trivial. Indeed if $f \in N^{+}(U)$ vanishes on $\Lambda$ then $f/(1-J) \in N^{+}(UJ)$ and conversely, if $g \in N^{+}(UJ)$ then $(1-J)g \in N^{+}(U)$ and the function vanishes on $\Lambda$.
            
            Now we use the fact that $N^{+}(UJ) \neq \left\{ 0 \right\}$ if and only if there exists a meromorphic inner function $I$ and a real-analytic outer function $H$ with no real zeros, such that
            \begin{equation*}
                JU = \overline{I}\frac{\overline{H}}{H},
            \end{equation*}
            see for example the first line in the proof of Proposition 3.13. in \cite{MR2215727}. Passing to arguments on the real-line we obtain
            \begin{equation*}
                \arg(J) + \arg U = -\arg(I) - 2\widetilde{\log\lvert H \rvert} + C,
            \end{equation*}
            since $H$ has no zeros. Since $2\pi n_{\Lambda} - J$ is bounded we have
            \begin{equation*}
                2\pi n_{\Lambda}-\alpha = (-\arg(I)+C) - 2\widetilde{\log\lvert H \rvert} - h - (2\pi n_{\Lambda}-\arg(J)).
            \end{equation*}
            The function $u = - 2\widetilde{\log\lvert H \rvert} - h - (2\pi n_{\Lambda}-\arg(J))$ is the conjugate of a $L^{1}(\mathbb{P})$ function. Hence the result follows.
        \end{proof}
    \end{lemma}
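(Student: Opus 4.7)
The plan is to reduce the vanishing problem on $\Lambda$ to the nontriviality of an auxiliary Toeplitz kernel, and then apply the standard inner–outer factorization criterion for Smirnoff Toeplitz kernels.

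First I would introduce a meromorphic inner function $J$ whose unit set on the real line is exactly $\Lambda$; by Theorem \ref{thm:construction_inner_function} (or by a direct Krein spectral shift construction), such a $J$ exists and satisfies $2\pi n_{\Lambda} - \arg(J) \in L^{\infty}(\mathbb{R})$. The key elementary observation is that $f \mapsto f/(1-J)$ is a bijection between nontrivial elements of $N^{+}(U)$ vanishing on $\Lambda$ and nontrivial elements of $N^{+}(UJ)$. Indeed, if $f \in N^{+}(U)$ vanishes on $\Lambda$, then $(1-J)$ cancels the zeros and $f/(1-J) \in N^{+}(UJ)$; conversely $(1-J)g \in N^{+}(U)$ vanishes on $\Lambda$ whenever $g \in N^{+}(UJ)$. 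Hence the problem is reduced to characterizing when $N^{+}(UJ) \neq \{0\}$.

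For this I would invoke the standard factorization criterion for Smirnoff Toeplitz kernels with real-analytic unimodular symbol: $N^{+}(UJ) \neq \{0\}$ if and only if there exists a meromorphic inner function $I$ and a real-analytic outer function $H$ with no real zeros such that
\begin{equation*}
    UJ = \overline{I}\,\frac{\overline{H}}{H}.
\end{equation*}
This is a variant of Lemma \ref{lemma:modulus_of_Smirnoff_class} applied to $V = UJ$ combined with the fact that the inner factor of any nontrivial element of $N^{+}(UJ)$ may be absorbed into $I$ and the outer factor is the $H$ above. The real-analyticity of $U$ and $J$ forces $I$ to be meromorphic and $H$ to be real-analytic.

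Taking arguments on $\mathbb{R}$ of the factorization and using that $H$ outer with no real zeros gives $\arg(\overline{H}/H) = -2\widetilde{\log |H|}$ up to a constant, I would arrive at
\begin{equation*}
    \arg(J) + \arg(U) = -\arg(I) - 2\widetilde{\log |H|} + C.
\end{equation*}
Substituting $\arg(U) = -\alpha + h$ and rearranging, using that the difference $2\pi n_{\Lambda} - \arg(J)$ is bounded, yields
\begin{equation*}
    2\pi n_{\Lambda} - \alpha = -\arg(I) + \widetilde{u} + C,
\end{equation*}
where $u = -2\log|H| - h$ differs from an $L^{1}(\mathbb{P})$ function by a bounded term, and the constant $C$ can be absorbed into $\arg(I)$. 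This gives the forward direction with $\gamma = \arg(I)$. For the converse, given the identity I would reverse the construction: define $H$ as the real-analytic outer function with $\log|H| = -u/2$, let $I$ be the meromorphic inner function with argument $\gamma$, and verify by reading the computation backwards that $UJ = \overline{I}\,\overline{H}/H$, so that $H \in N^{+}(UJ)$ and thus $(1-J)H$ is a nontrivial element of $N^{+}(U)$ vanishing on $\Lambda$.

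The main obstacle is the citation-level step: justifying the factorization $UJ = \overline{I}\,\overline{H}/H$ cleanly, in particular ensuring that $H$ inherits real-analyticity from the real-analyticity of $U$ and that $I$ is meromorphic (this is where the real-analytic symbol hypothesis is essential). Once this factorization lemma is in hand, the rest is a bookkeeping exercise in tracking arguments modulo bounded errors and the constant $C$.
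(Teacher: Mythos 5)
Your proposal follows the paper's own argument essentially step for step: the bijection $f \mapsto f/(1-J)$ reducing the vanishing problem on $\Lambda$ to the nontriviality of $N^{+}(UJ)$, the same factorization criterion $UJ = \overline{I}\,\overline{H}/H$ from Proposition 3.13 of \cite{MR2215727}, and the same passage to arguments with $2\pi n_{\Lambda}-\arg(J)$ bounded. The only differences are cosmetic bookkeeping ones (you drop the explicit term $2\pi n_{\Lambda}-\arg(J)$ from your displayed $u$ but account for it verbally, and you spell out the converse direction a bit more than the paper does), so this is the same proof.
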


\section{Beurling-Malliavin majorants for model spaces}\label{sec:bm_majorants}

    We shall use the representation for the absolute values of elements of a Toeplitz kernel from Lemma \ref{lemma:modulus_of_Smirnov_class}. Indeed, if we can find a measurable and nonnegative function $m$ with $\log m \in L^{1}(\mathbb{P})$ such that $m \omega = \lvert f\rvert$ with $f \in N^{+}(\overline{\Theta})$, and such that $m$ is bounded then
    \begin{equation*}
        \lvert f(x) \rvert = m(x)\omega(x) \leq \norm{m}_{\infty}\omega(x),
    \end{equation*}
    and we would be done. It turns out to be easiest to first construct $m$ which may grow as a polynomial and then correct afterwards.

    \begin{proof}[Proof of Theorem \ref{thm:BM_majorant}]
        Let $m \geq 0$ and suppose $m \in L^{1}(\mathbb{P})$. Then by Lemma \ref{lemma:modulus_of_Smirnov_class} $m\omega$ is the absolute value of a $N^{+}(S_{2\epsilon}\overline{\Theta})$ function if there exists an inner function $I$, such that
        \begin{equation*}
            2\widetilde{\log(m)} = 2\widetilde{\Omega} + \arg \Theta -2\epsilon x- \arg I.
        \end{equation*}
        Let $\epsilon > 0$ be so small that $\alpha = 2\widetilde{\Omega} + \arg \Theta$ satisfies $\alpha'(x) - 2\epsilon \gtrsim 1$. By assumption the function $\alpha - 2\epsilon x$ satisfies the assumptions of Theorem \ref{thm:main} (for sufficiently small $\epsilon > 0$ the perturbation does not destroy the local doubling condition) and hence there exists a meromorphic inner function $I$, such that $\alpha-2\epsilon x- \arg(I) \in L^{\infty}$ and
        \begin{equation*}
            \lvert \alpha'(x)-2\epsilon - \arg(I)'(x) \rvert \lesssim \langle x \rangle^{K}, \text{ for some } K \in \mathbb{N}.
        \end{equation*}
        Thus defining $m$ via
        \begin{equation*}
            2\log(m) = -\widetilde{\left( 2\widetilde{\Omega} + \arg \Theta-2\epsilon x - \arg I\right)},
        \end{equation*}
        there exists $f \in N^{+}(S_{2\epsilon}\overline{\Theta})$, such that $m = \lvert f \rvert$. it remains to prove the estimates. Set
        \begin{equation*}
            u(x) = -\widetilde{\Omega}(x) - 2^{-1}\arg \Theta(x)-\epsilon x + 2^{-1}\arg I(x).
        \end{equation*}
        Then $m(x) = \exp(\widetilde{u})$. The function $u$ is bounded and satisfies $\lvert u'(x) \lvert \lesssim \langle x \rangle^{K}$, $K \geq 1$. Hence, by Lemma \ref{lemma:est_H_transform}, its Hilbert transform satisfies $\lvert \widetilde{u}'(x) \rvert \lesssim \langle x \rangle^{K}$. Let $M = \norm{u}_{\infty}$. Then $v = u/M$ is bounded by one and hence by the Zygmund estimate (Corollary 2.6. in Chapter 3 of \cite{MR2261424})
        \begin{equation*}
            \int_{-\infty}^{\infty} e^{\lvert \widetilde{v}(x)\rvert}d\mathbb{P}(x) <\infty.
        \end{equation*}
        We may bound the function $h = m^{1/M} = \exp(\widetilde{u}/M)$ as follows
        \begin{equation*}
            \lvert h(x) \rvert \leq \lvert h(0) \rvert + \frac{1}{M}\int^{x}_{0} \lvert \widetilde{u}'(x) \rvert h(x)dx \lesssim 1 + \langle x \rangle^{N+2}\int_{-\infty}^{\infty} e^{\lvert \widetilde{u(x)/M}\rvert}d\mathbb{P}(x) \lesssim\langle x \rangle^{N+2}.
        \end{equation*}
        Hence, we obtain
        \begin{equation*}
            \lvert f(x) \rvert = m(x)\omega(x)\lesssim \omega(x)\langle x \rangle^{N_{0}},
        \end{equation*}
        for some non-trivial $f \in K^{+}_{\Theta}$. It remains to fix the polynomial growth at infinity. For any $N_{0} > 0$ we may choose $\delta > 0$ so small that $N\delta < \epsilon$. Then the function
            \begin{equation*}
                h(z) = \left(\frac{\sin(z\delta)}{z\delta}\right)^{N_{0}+1},
            \end{equation*}
            is an entire function of exponential type less than $\epsilon$ satisfying
            \begin{equation*}
                 \lvert h(x) \rvert \leq \langle x \rangle^{-(N_{0}+1)}.
            \end{equation*}
            The function $S_{\epsilon}h$ belongs to $N^{+}(S_{-2\epsilon})$ and hence $hf \in K_{\Theta}$, moreover,
            \begin{equation*}
                \lvert h(x)f(x)\rvert \lesssim\omega(x), \text{ for all } x \in \mathbb{R}.
            \end{equation*}
    \end{proof}
    
\bibliographystyle{abbrv}
\bibliography{citations}

\end{document}